\documentclass{amsart}

\usepackage{amsthm,amsmath}
\usepackage{amssymb, amsfonts}
\usepackage{chngcntr}
\usepackage{color}
\usepackage{dsfont}
\usepackage{enumitem}
\usepackage{fancyhdr}
\usepackage{graphicx}
\usepackage{hyperref}
\usepackage{mathtools}
\usepackage{tikz}
\usepackage{tikz-cd}
\usepackage{ulem}
\usepackage{upgreek}
\usepackage{wasysym}
\usepackage{lipsum}

\usepackage[font=footnotesize]{caption}

\usepackage{cleveref}

\usetikzlibrary{trees}

\newtheorem{thm}{Theorem}[section]
\newtheorem{theorem}[thm]{Theorem}

\newtheorem{lemma}[thm]{Lemma}

\theoremstyle{definition}

\newtheorem{notation}[thm]{Notation}

\newtheorem{fact}[thm]{Fact}
\newtheorem{definition}[thm]{Definition}

\newtheorem{rem/def}[thm]{Remark/Definition}

\counterwithin{figure}{thm}

\newtheorem*{thmx}{Theorem \ref{thm: criterion NCTP}}
\newtheorem*{thmy}{Theorem \ref{thm: criterion NBTP}}
\newtheorem*{lemmaz}{Lemma \ref{lem: weak BTP}}

\def \lex {<_{lex}}

\newcommand{\trn}{%
  \mathrel{\ooalign{$\lneq$\cr\raise.21ex\hbox{$\lhd$}\cr}}}
\newcommand{\trrn}{%
  \mathrel{\ooalign{$\gneq$\cr\raise.21ex\hbox{$\rhd$}\cr}}}
\def \coc {{^{\frown}}}

\def \cl {{\rm cl}}

\def \la {\langle}
\def \ra {\rangle}
\def \Tau {\mathcal{T}}

\def \age {\textrm{age}}

\def \CI {\mathcal{I}}
\def \CJ {\mathcal{J}}
\def \CL {\mathcal{L}}

\def \CT {\mathcal{T}}

\def \cf {\text{cf}}
\def\tp{\operatorname{tp}}
\def\Th{\operatorname{Th}}

\def\qftp{\operatorname{qftp}}

\title{Mekler's Construction and the Preservation of NCTP and NBTP}
\author[JinHoo Ahn]{JinHoo Ahn}
\address{Changwon National University, Department of Mathematics\\ 20 Changwondaehakro, Uichang-gu\\ 51140, Changwon-si, Gyeongsangnam-do, South Korea,}
\email{jinu1229@changwon.ac.kr}
\author[Joonhee Kim]{Joonhee Kim}
\address{Korea Institute for Advanced Study, School of Mathematics\\ 85 Hoegiro Dongdaemun-gu\\ 02455, Seoul, South Korea}
\email{kimjoonhee@kias.re.kr}
\thanks{The first author is supported by the Global - Learning \& Academic research institution for Master's $\cdot$ PhD students, and Postdocs (LAMP) Program of the National Research Foundation of Korea (NRF) grant funded by the Ministry of Education (No.~RS-2024-00444460), and by the National Research Foundation of Korea (NRF) grant funded by the Korea government (MSIT) (No.~RS-2024-00451678).
The second author is supported by a KIAS individual grant (project no.~6G091801) and by an NRF of Korea grant (No.~2021R1A2C1009639).}

\begin{document}

\begin{abstract}
We give criteria for a first-order theory to be NCTP or NBTP using tree-indiscernibility.
As an application, we show that Mekler’s construction preserves NCTP and NBTP.
\end{abstract}

\maketitle

\section{Introduction}

Mekler’s construction \cite{Mek81} provides a uniform method for interpreting an arbitrary first-order structure inside a 2-nilpotent group of finite exponent. Although this interpretation is not a bi-interpretation, it has proved remarkably robust in preserving a wide range of model-theoretic dividing lines arising in Shelah’s classification theory \cite{She90}. In his original work \cite{Mek81}, Mekler introduced this construction and established several model-theoretic features of it, including the preservation of stability. Building on this, subsequent results showed that the construction preserves simplicity and, assuming stability, CM-triviality \cite{Bau02}. It was later established that further dividing lines such as NIP, $k$-dependence, and NTP$_2$ are preserved \cite{CH18}, as well as NTP$_1$ \cite{Ahn19}. More recently, it has now been shown that Mekler’s construction preserves every dividing line defined via collapsing indiscernibles \cite{BPT24}.

On the other hand, over the past few years, several attempts have been made to study new dividing lines that encompass both the classes of NTP$_1$ and NTP$_2$ theories, namely NATP, NCTP, and NBTP. These notions are defined via consistency–inconsistency configurations indexed by tree-structures, and various properties of these dividing lines have been gradually investigated in subsequent work \cite{AK20, AKL21, AKLL22, Han23, Han25, KL23, KR23, M-G25, Mut22}. Their relationship to other dividing lines is described by the following implications.
\[
\begin{tikzpicture}[>=stealth, x=1.6cm, y=1.2cm]
\node (stable) at (0.15,-1) {stable};
\node (NIP) at (0.15,0) {NIP};
\node (NTP)   at (1.1,-1) {simple};
\node (NTP2)   at (1.1,0) {NTP$_2$};
\node (NTP1)   at (4,-1) {NTP$_1$};
\node (NSOP3)   at (5,-1) {NSOP$_3$};
\node (cdots)   at (5.85,-1) {$\cdots$};
\node (NBTP)   at (2.05,-0.66) {NBTP};
\node (NCTP)   at (3.03,-0.33) {NCTP};
\node (NATP)   at (4,0) {NATP};

\draw[->] (stable) -- (NIP);
\draw[->] (stable) -- (NTP);
\draw[->] (NTP) -- (NTP2);
\draw[->] (NTP) -- (NTP1);
\draw[->] (NIP) -- (NTP2);
\draw[->] (NTP1) -- (NSOP3);
\draw[->] (NSOP3) -- (cdots);
\draw[->] (NTP) -- (NBTP);
\draw[->] (NTP1) -- (NBTP);
\draw[->] (NTP2) -- (NBTP);
\draw[->] (NTP1) -- (NCTP);
\draw[->] (NTP2) -- (NCTP);
\draw[->] (NTP1) -- (NATP);
\draw[->] (NTP2) -- (NATP);
\draw[->] (NBTP) -- (NCTP);
\draw[->] (NCTP) -- (NATP);
\end{tikzpicture}
\]

The preservation of NATP under Mekler’s construction was established in earlier joint work of the authors with Lee \cite{AKL21}. However, despite the success of Mekler’s construction in preserving many major dividing lines, the behavior of NCTP and NBTP under this construction has not previously been addressed. The main goal of this paper is to show that Mekler’s construction also preserves these two dividing lines. We show that if a theory T is NCTP or NBTP, then the theory of its Mekler group is again NCTP or NBTP, respectively.

A key technique in proving the main results is the development of new criteria for NCTP and NBTP formulated in terms of tree indiscernibility, as below.
\begin{thmx}
The following are equivalent.
\begin{itemize}
\item[(i)] $T$ is NCTP.
\item[(ii)] For any parameter $b$, a cardinal $\kappa>2^{|T|+|b|+\aleph_0}$ with $\cf(\kappa)=\kappa$, and a strong indiscernible tree $(a_\eta)_{\eta\in\omega^{<\kappa}}$, there exist $b'$ and $i<\kappa$  such that 
\begin{itemize}
\item[$\ast$] $b'\equiv_{a_{0^i{}^\frown(1)}}\!b$,
\item[$\ast$] $(a_{0^i{}^\frown(1)^\frown 0^j})_{j<\kappa}$ is order indiscernible over $b'$.
\end{itemize}
\end{itemize}
\end{thmx}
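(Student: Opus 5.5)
The plan is to prove both directions by compactness and pigeonhole arguments, using the modeling property for strong indiscernible trees (the existence of strong indiscernible trees locally based on any given tree) to move between the formula-level definition of CTP and the indiscernible-tree statement in (ii). Throughout, I rely on the following standard fact about indiscernible sequences. Let $I=(c_j)_{j<\kappa}$ be indiscernible and $p(x,c_0)$ a type. If $\bigcup_{j<\kappa} p(x,c_j)$ is consistent, then by Ramsey and compactness there is $b'\models p(x,c_0)$ such that $I$ is indiscernible over $b'$. Conversely, if no such $b'$ exists, then some $\psi(x,c_0)\in p$ makes $\{\psi(x,c_j)\}_{j<\kappa}$ $k$-inconsistent for some $k$.

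\emph{(ii)$\Rightarrow$(i).} Suppose $T$ has CTP, witnessed by $\varphi(x,y)$, $k$, and a tree $(a_\eta)_{\eta\in\omega^{<\omega}}$. By compactness and the modeling property, I first replace this by a strong indiscernible tree $(a_\eta)_{\eta\in\omega^{<\kappa}}$ with the same consistency and inconsistency pattern. The point to check is that the nodes $0^i{}^\frown(1)$, $i<\kappa$, lie in a configuration which the CTP definition declares consistent. I expect they do, since for $i<i'$ their meets are all $0^i$, so they form a ``comb''-shaped set. I then choose $b\models\{\varphi(x,a_{0^i{}^\frown(1)})\}_{i<\kappa}$.

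Now apply (ii) to obtain $b'$ and $i$. Since $b'\equiv_{a_{0^i{}^\frown(1)}} b$, we have $\models\varphi(b',a_{0^i{}^\frown(1)})$. Because $(a_{0^i{}^\frown(1)^\frown 0^j})_j$ is indiscernible over $b'$, we get $\models\varphi(b',a_{0^i{}^\frown(1)^\frown 0^j})$ for every $j$. But $\{0^i{}^\frown(1)^\frown 0^j\}_j$ is a chain along a path starting at $0^i{}^\frown(1)$. Under the intended reading of the CTP configuration, this chain (or a sub-configuration of it, after using strong indiscernibility to rearrange it) is one the definition makes $k$-inconsistent. That contradicts the existence of $b'$, so $T$ is NCTP.

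\emph{(i)$\Rightarrow$(ii).} Suppose (ii) fails for $b$, $\kappa$ and the tree. Then for each $i<\kappa$, the fact above applied to $p_i=\tp(b/a_{0^i{}^\frown(1)})$ gives a formula $\psi_i(x,a_{0^i{}^\frown(1)})\in p_i$ and $k_i<\omega$ such that $\{\psi_i(x,a_{0^i{}^\frown(1)^\frown 0^j})\}_{j<\kappa}$ is $k_i$-inconsistent. Since $\kappa$ is regular and $\kappa>2^{|T|+|b|+\aleph_0}$, there are only $|T|+|b|+\aleph_0<\kappa$ possible pairs $(\psi_i,k_i)$. So one fixed pair $(\psi,k)$ occurs for a set $X\subseteq\kappa$ of size $\kappa$, and I may absorb $b$-parameters into $\psi$ as needed.

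The resulting configuration has two features. The single element $b$ realizes $\psi(x,a_{0^i{}^\frown(1)})$ for all $i\in X$, and each ``tooth'' $(a_{0^i{}^\frown(1)^\frown 0^j})_j$ is $k$-inconsistent. Using strong indiscernibility, which lets me transport any finite configuration of nodes with the same strong quantifier-free type, I would embed $\omega^{<\omega}$ into the subtree spanned by $\{0^i{}^\frown(1)^\frown 0^j : i\in X,\ j<\kappa\}$. The embedding should send the consistent sets of the CTP definition to sets of the form $\{0^i{}^\frown(1) : i\in X\}$, which are consistent via $b$ and indiscernibility. It should send the inconsistent sets to teeth, which are inconsistent by the choice of $(\psi,k)$. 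This yields CTP, contradicting (i).

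The main obstacle is the combinatorial matching in both directions. I must check exactly that the sets the CTP definition requires to be consistent correspond, under some strong embedding, to the ``spine'' nodes $0^i{}^\frown(1)$, while the sets it requires to be $k$-inconsistent correspond to the vertical chains $0^i{}^\frown(1)^\frown 0^j$. My first attempt is to read off the precise comb definition and build the embedding $\omega^{<\omega}\to\omega^{<\kappa}$ by induction on the level, sending each node to a suitable $0^{i}{}^\frown(1)^\frown 0^{j}$. I would use $\kappa$'s regularity so that there is always room to choose new indices $i\in X$ and $j$ in increasing order.
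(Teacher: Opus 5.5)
Your proposal is essentially the paper's proof in both directions. For (ii)$\Rightarrow$(i) you realize $\varphi$ along the descending comb $\{0^i{}^\frown(1)\}_{i<\kappa}$ and push it up the chain $(a_{0^i{}^\frown(1)^\frown 0^j})_{j<\kappa}$. For (i)$\Rightarrow$(ii) you pigeonhole over $i<\kappa$ to get one formula that $b$ satisfies along the spine $\{0^i{}^\frown(1)\}_{i\in X}$ but that is $k$-inconsistent along a tooth; you pigeonhole on the pairs $(\psi_i,k_i)$, whereas the paper pigeonholes on $\tp(b,a_{0^i{}^\frown(1)})$.

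The one step to discard is the global embedding of $\omega^{<\omega}$ into $\{0^i{}^\frown(1)^\frown 0^j\}$ that you plan to build by induction. It cannot exist: every branch passes through $\emptyset$, so sending branches into teeth would put the entire image inside the single tooth containing the image of $\emptyset$, which is a chain and has no room for combs. It is also not needed. Every $n$-element subset of a descending comb has the same $\mathcal{L}_0$-quantifier-free type as $n$ spine nodes $0^{i}{}^\frown(1)$, and every $k$-element chain has the same type as $k$ nodes of one tooth. So strong indiscernibility plus compactness already make $\psi$ witness CTP with the given tree, which is exactly the paper's ``by strong indiscernibility'' step.
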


\begin{thmy}
The following are equivalent.
\begin{itemize}
\item[(i)] $T$ is NBTP.
\item[(ii)] For any parameter $b$, a cardinal $\kappa>2^{|T|+|b|+|\aleph_0|}$ with $\cf(\kappa)=\kappa$, and a level-s-indiscernible tree $(a_\eta)_{\eta\in\omega^{<\kappa}}$, there exist $i<\kappa$ and $b'$ such that 
\begin{itemize}
\item[$\ast$] $b'\equiv_{a_{0^i{}^\frown(1)}}\!b$,
\item[$\ast$] $(a_{0^i{}^\frown(j+1)})_{j<\omega}$ or $(a_{0^i{}^\frown(2)^j{}^\frown(1)})_{j<\kappa}$ is order indiscernible over $b'$.
\end{itemize}
\end{itemize}
\end{thmy}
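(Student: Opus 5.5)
The plan is to prove the two directions separately, mirroring the argument for Theorem \ref{thm: criterion NCTP}. Throughout, I use the modeling property for level-s-indiscernible trees (that any tree of parameters can be replaced by a level-s-indiscernible one locally based on it). I also use the definition of BTP as a formula $\varphi(x,y)$ together with a tree $(c_\eta)_{\eta\in\omega^{<\omega}}$ whose paths are consistent and whose ``bounded'' antichains are $k$-inconsistent. I will only need that these antichains include sibling antichains $\{\eta^\frown(j)\}_j$ and descending combs $\{\eta^\frown (2)^j{}^\frown(1)\}_j$.

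\textbf{(ii)$\Rightarrow$(i).} Suppose $\varphi(x,y)$ witnesses BTP with $k$-inconsistency. By compactness, stretch the witness to $\omega^{<\kappa}$ with $\kappa$ regular and large. Then apply the modeling property to obtain a level-s-indiscernible tree $(a_\eta)_{\eta\in\omega^{<\kappa}}$ which still satisfies the same consistency/inconsistency pattern, since these are properties of finitely many nodes in fixed configurations. Choose $b$ realizing $\{\varphi(x,a_\eta):\eta\in P\}$ for a suitable path $P$. I will arrange the path, by re-indexing via the level-s-indiscernibility, so that $\varphi(b,a_{0^i{}^\frown(1)})$ holds for every $i$. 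This is the one place where I have to check that a path through the nodes $0^i{}^\frown(1)$ is available after a tree embedding.

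Now apply (ii) to get $i$ and $b'\equiv_{a_{0^i{}^\frown(1)}} b$. Then $\varphi(b',a_{0^i{}^\frown(1)})$ holds, and one of the two sequences is indiscernible over $b'$; its first term is $a_{0^i{}^\frown(1)}$. So $\varphi(b',-)$ holds for every term of that sequence. That sequence is either a sibling antichain or a descending comb, and in both cases it is $k$-inconsistent. This is a contradiction, so $T$ is NBTP.

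\textbf{(i)$\Rightarrow$(ii).} This is the main direction. Suppose (ii) fails for some $b$, $\kappa$ and level-s-indiscernible $(a_\eta)$. Then for every $i<\kappa$ and every $b'\equiv_{a_{0^i{}^\frown(1)}} b$, neither sequence is indiscernible over $b'$. Let $p_i(x)=\tp(b/a_{0^i{}^\frown(1)})$.

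The key claim is that for each $i$ there is a formula $\psi_i(x,y)\in p_i$ with one of the following two failures:
\begin{itemize}
\item $\{\psi_i(x,a_{0^i{}^\frown(j+1)}):j<\omega\}$ is $k_i$-inconsistent, or
\item $\{\psi_i(x,a_{0^i{}^\frown(2)^j{}^\frown(1)}):j<\kappa\}$ is $k_i$-inconsistent.
\end{itemize}
To prove the claim, argue by contraposition. If for every $\psi\in p_i$ both families were consistent, then by compactness $\bigcup_j p_i(x,a_{\text{$j$-th term}})$ would be consistent along a sequence. The sequence is indiscernible over $a_{\le 0^i}$ by level-s-indiscernibility. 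So by an Erd\H{o}s--Rado extraction (using $\kappa>2^{|T|+|b|}$) and automorphisms, we would get some $b'\models p_i$ over which that sequence is indiscernible. This contradicts the failure of (ii).

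Having the claim, I use $\cf(\kappa)=\kappa>2^{|T|}$ and pigeonhole to fix the same formula $\psi$, the same $k$, and the same alternative (siblings or comb) for cofinally many $i$. Level-s-indiscernibility makes the truth of these instances independent of $i$. Meanwhile $b$ itself realizes $\psi(x,a_{0^i{}^\frown(1)})$ for all those $i$, which gives consistency along the ``path'' $\{0^i{}^\frown(1)\}$. Re-embedding $\omega^{<\omega}$ into the tree along these cofinally many levels then yields a BTP pattern for $\psi$, contradicting NBTP.

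The main obstacle I expect is this last re-embedding. The consistent set $\{0^i{}^\frown(1)\}$ is itself a comb rather than a genuine path, so I must choose the embedding carefully, using that the tree is level-s-indiscernible rather than strongly indiscernible. Two things have to hold at once: genuine paths of the new tree must map to sets whose type matches that of the comb realized by $b$, and bounded antichains must map to sibling or comb sequences of the uniform kind. I would first try to handle this by conjugating via the level-s-indiscernibility so that combs $\{0^i{}^\frown(1)\}_{i\in I}$ play the role of paths, and then check which configurations the antichains land in.
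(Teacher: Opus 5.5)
\textbf{There is a genuine gap in (i)$\Rightarrow$(ii), at your key claim and the pigeonhole that follows it.} For each $i$ you only extract a formula $\psi_i\in p_i$ for which \emph{one} of the two families is inconsistent. You then fix a single alternative for cofinally many $i$. From that data no BTP pattern can be built, by any re-embedding.

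BTP (equivalently BTP$^-$, Lemma~\ref{lem: weak BTP}) needs $k$-inconsistency both on direct sibling sets and on strict right-veering paths such as $\{\eta^\frown(2)^j{}^\frown(1)\}_j$. These configurations have different $\mathcal{L}_s$-types, so level-s-indiscernibility does not transfer one kind of inconsistency to the other. Consistency on strict left-leaning paths plus a single kind of inconsistency yields only the tree property; this is exactly what the maps $f$ and $g$ in Lemma~\ref{lem: simple level-s-indiscernible} produce.

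This situation really does occur in NBTP theories. In a non-simple NTP$_1$ theory (NTP$_1$ implies NBTP), take an indiscernible TP$_2$ array and index the entries of row $n$ by the level-$n$ nodes. This gives a level-s-indiscernible tree on which direct siblings are inconsistent, while every strict left-leaning path, meeting each level at most once, is consistent.

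The missing observation is that failure of (ii) gives \emph{both} failures at once. If either $\bigcup_j p(x,a_{0^i{}^\frown(j+1)})$ or $\bigcup_j p(x,a_{0^i{}^\frown(2)^j{}^\frown(1)})$ alone were consistent, Ramsey, compactness and an automorphism would already give some $b'\equiv_{a_{0^i{}^\frown(1)}}b$ over which that sequence is indiscernible. So you get $\psi_0,\psi_1\in p$ witnessing the two inconsistencies. As in the paper, $\psi:=\psi_0\wedge\psi_1$ is then consistent along $\{0^i{}^\frown(1)\}_{i\in X}$ (via $b$), and by level-s-indiscernibility it witnesses BTP$^-$ with the tree. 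Lemma~\ref{lem: weak BTP} then gives BTP.

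\textbf{The ``main obstacle'' you anticipate comes from misreading the definition.} BTP asks for consistency along \emph{left-leaning} paths, and these include left combs. In fact $\{0^i{}^\frown(1)\}_{i<\kappa}$ is itself a strict left-leaning path, with $j=0<1=t(0^i{}^\frown(1))$. So no comb has to be made to play the role of a path, and the re-embedding you are looking for is the explicit map $f$ of Lemma~\ref{lem: weak BTP}.

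The same remark settles your worry in (ii)$\Rightarrow$(i): consistency of $\{\varphi(x,a_{0^i{}^\frown(1)})\}_{i<\kappa}$ is immediate. Re-indexing by level-s-indiscernibility could never turn a genuine path into a comb anyway, since $\unlhd\in\mathcal{L}_s$. Apart from this, that direction is the paper's argument.

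Finally, level-s-indiscernibles do not have the modeling property in general, because levels can be coded (e.g.\ $a_\eta=l(\eta)$ in $(\mathbb{N},S)$). What you actually need there is the existence of a level-s-indiscernible witness, which is Lemma~\ref{lem: level-s-indiscernible witness of BTP}.
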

\noindent In the proof of \Cref{thm: criterion NBTP}, we use the following slightly more refined formulation of BTP.
\begin{lemmaz}
$T$ has BTP if and only if there exist $\varphi(x,y)$, $(a_\eta)_{\eta\in\omega^{<\omega}}$, and $k<\omega$ such that
\begin{itemize}
\item[(i)] $\{\varphi(x,a_\eta)\}_{\eta\in X}$ is consistent for any strict left-leaning path $X$,
\item[(ii)] $\{\varphi(x,a_\eta)\}_{\eta\in X}$ is $k$-inconsistent for any strict right-veering path $X$,
\item[(iii)] $\{\varphi(x,a_\eta)\}_{\eta\in X}$ is $k$-inconsistent for any direct sibling set $X$.
\end{itemize}
\end{lemmaz}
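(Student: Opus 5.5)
The plan is to prove the two directions separately. The backward direction is immediate: a witness $(\varphi,(a_\eta)_{\eta\in\omega^{<\omega}},k)$ satisfying (i)--(iii) in particular satisfies (i) and (ii). These two clauses are exactly the defining configuration of BTP, so $T$ has BTP. All the work is in the forward direction. There we start from a BTP witness $\psi(x,y)$, $(c_\eta)_{\eta\in\omega^{<\omega}}$, $k$ satisfying (i) and (ii), and we must produce a possibly different formula, tree and $k'$ that also satisfy the sibling clause (iii).

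\emph{Step 1 (regularize).} Using the modeling property for strongly (or level-s-) indiscernible trees together with compactness, I would first assume that $(c_\eta)$ is indiscernible in the appropriate tree language. Clauses (i) and (ii) are expressed by finitely many formulas about finite subsets of a given quantifier-free shape, so they transfer to the indiscernible tree obtained by Ehrenfeucht--Mostowski modeling. After this step, consistency or inconsistency of $\{\psi(x,c_\eta)\}_{\eta\in X}$ depends only on the isomorphism type of the finite set $X$.

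\emph{Step 2 (re-index so that siblings become right-veering).} The idea is to replace each node by a tuple that carries, alongside its own parameter, a witness lying on a right-veering configuration. Concretely, I would fix a level- and $\lhd$-preserving map $f\colon\omega^{<\omega}\to\omega^{<\omega}$ sending $\eta{}^\frown\langle i\rangle$ to $f(\eta){}^\frown\langle 1\rangle^{i}{}^\frown\langle 0\rangle$, padded if needed. I would also fix an auxiliary map $g(\eta{}^\frown\langle i\rangle)=f(\eta){}^\frown\langle 1\rangle^{i+1}$. Then I would set
\[
b_\eta=(c_{f(\eta)},c_{g(\eta)}),\qquad \varphi(x;y_1y_2)=\psi(x,y_1)\wedge\psi(x,y_2).
\]
The maps are chosen so that three things hold:
\begin{itemize}
\item[(a)] $f$ and $g$ send strict left-leaning paths to sets whose union is still contained in a strict left-leaning path. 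This gives (i) for $\varphi$.
\item[(b)] The image of a strict right-veering path contains a strict right-veering subset of comparable size. This gives (ii) for $\varphi$ with the same $k$.
\item[(c)] For any $k$ direct siblings $\eta{}^\frown\langle i_1\rangle,\dots,\eta{}^\frown\langle i_k\rangle$, the nodes $g(\eta{}^\frown\langle i_j\rangle)$ and $f(\eta{}^\frown\langle i_j\rangle)$ contain a strict right-veering set of size $k$. This gives (iii).
\end{itemize}
If the particular maps above do not meet all three requirements, the fallback is to pick the targets of siblings one level deeper, as successive right turns off a left spine.

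\emph{Step 3 (main obstacle).} The hard part is the combinatorial verification that one embedding simultaneously preserves strict left-leaning paths and turns sibling sets into right-veering sets. The tension is that siblings are pairwise incomparable, whereas left-leaning paths must remain consistent. I expect this to require unwinding the precise definitions of ``strict left-leaning'' and ``strict right-veering'' and checking that the images never mix the two shapes.

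If no clean embedding exists, I would fall back on a case split using the indiscernibility from Step 1. Either $\{\psi(x,c_{\eta{}^\frown\langle i\rangle})\}_{i<\omega}$ is $k'$-inconsistent for some $k'$, in which case taking $\max(k,k')$ finishes. Or, by indiscernibility, the sibling set is consistent. In that case I would amalgamate with left-leaning consistency via compactness, and either derive a contradiction with (ii) or show that collapsing each sibling family to a single node yields a tree in which (iii) holds vacuously after re-indexing.
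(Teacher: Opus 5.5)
There is a genuine gap: you have the two directions reversed, and the one you call immediate is where all the content lies.

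\textbf{The direction from (i)--(iii) to BTP is not immediate.} Clauses (i) and (ii) are not the BTP configuration.
\begin{itemize}
\item A left-leaning path allows $j\le t(\eta_i)$, so it includes ordinary chains $\eta_0\lhd\eta_1\lhd\cdots$ (take $j=t(\eta_i)$). Clause (i) says nothing about these.
\item A right-veering path only requires $\eta_i^-{}^\frown(j)\unlhd\eta_{i+1}$. So each of its steps is either a direct-sibling step or a strict step, and a path mixing the two kinds is covered by neither (ii) nor (iii).
\end{itemize}
So a witness of (i)--(iii) need not be a BTP witness as it stands. Your backward argument also never uses (iii), which is exactly what is needed there.

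\textbf{The direction you work hard on is trivial.} Every strict left-leaning path is left-leaning, and every strict right-veering path and every direct sibling set is right-veering. Hence a BTP witness satisfies (i)--(iii) verbatim, with the same $\varphi$, tree and $k$. Your Steps 1--3 are unnecessary there, and, as you concede, properties (a)--(c) of your maps are never checked.

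\textbf{The missing idea} for the real direction is a re-indexing $b_\eta:=a_{f(\eta)}$. It should turn every left-leaning step into a strict left-leaning step, while sending right-veering steps to steps of the two pure kinds. The paper takes
\begin{itemize}
\item $f(\emptyset)=(1)$,
\item $f(\eta)=(0)^\frown(2t(\eta)+1)$ if $l(\eta)=1$,
\item $f(\eta)=f(\eta^-)^-{}^\frown(2t(\eta^-))^\frown(2t(\eta)+1)$ otherwise.
\end{itemize}
That is, the children of $\eta$ hang below the even node $f(\eta)^-{}^\frown(2t(\eta))$, immediately to the left of $f(\eta)$. One checks that $f(\nu)\rhd f(\eta)^-{}^\frown(2j)$ whenever $\nu\rhd\eta^-{}^\frown(j)$. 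Consequently:
\begin{itemize}
\item A left-leaning step ($j\le t(\eta)$, hence $2j<2t(\eta)+1$) becomes a strict left-leaning step. So (i) gives consistency on all left-leaning paths.
\item A strict right step stays strict right-veering, since $2j>2t(\eta)+1$.
\item A direct-sibling step stays a direct-sibling step.
\end{itemize}

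For inconsistency, take a right-veering path. Two of its elements form a sibling pair exactly when every step between them is a sibling step, so the path splits into consecutive blocks of siblings. Any $k^2$ of its elements then do one of two things:
\begin{itemize}
\item They contain $k$ elements from one block. These form a direct sibling set, inconsistent by (iii).
\item They meet $k$ blocks. One element from each forms a strict right-veering path, inconsistent by (ii).
\end{itemize}
Hence $\varphi$ with $(b_\eta)$ witnesses BTP with $k^2$ in place of $k$.
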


Characterizations analogous to the above theorems for NTP$_1$, NTP$_2$, and NATP were established in \cite[Proposition 3.7]{Ahn19}, \cite[Fact 5.3]{CH18}, and \cite[Theorem 3.25]{AKL21}, respectively. As those results have been useful in the study of these dividing lines, we expect that Theorems~\ref{thm: criterion NCTP} and~\ref{thm: criterion NBTP} will likewise be useful for the investigation of NCTP and NBTP theories. The results on Mekler’s construction presented in this paper provide one such application.

\section{Preliminary}
We assume familiarity with basic model-theoretic notions. Below we recall the notation and terminology that will be used frequently throughout the paper.

\subsection{Tree language and tree properties}

\begin{notation}\label{notation: language of trees}
Let $\kappa$ and $\lambda$ be cardinals.
\begin{enumerate}
\item[(i)] $\kappa^\lambda$ is the set of all functions from $\lambda$ to $\kappa$. 
\item[(ii)] $\kappa^{<\lambda}:=\bigcup_{\alpha<\lambda}{\kappa^\alpha}$.
\item[(iii)] By $\emptyset$, we mean the {\it empty string} in $\kappa^{<\lambda}$, which means the empty set (recall that every function can be regarded as a set of ordered pairs). Thus $\emptyset \in \kappa^0$.
\item[(iv)] By $(i_0...i_{n-1})$ for $i_0,...,i_{n-1}<\kappa$, we mean the function $\eta\in\kappa^n$ such that $\eta(m)=i_m$ for each $m<n$.
\item[(v)] By $i^\alpha$ and $(i)^\alpha$ for $i<\kappa$ and $\alpha<\lambda$, we mean the function $\eta\in\kappa^\alpha$ such that $\eta(m)=i$ for all $m<\alpha$. Note that $i^0=(i)^0=\emptyset$.
\end{enumerate}
\smallskip
Let $\eta,\nu\in \kappa^{<\lambda}$, $i<\kappa$, $\alpha<\lambda$.
\begin{enumerate}
\item[(vi)] By $\eta\unlhd\nu$, we mean $\eta \subseteq \nu$. So $\kappa^{<\lambda}$ is partially ordered by $\unlhd$. If $\eta\unlhd\nu$ or $\nu\unlhd\eta$, then we say $\eta$ and $\nu$ are {\it comparable}. 

\item[(vii)] By $\eta\perp\nu$, we mean that $\eta\not\!\!\unlhd\,\nu$ and $\nu\not\!\!\unlhd\,\eta$. We say $\eta$ and $\nu$ are {\it incomparable} if $\eta\perp\nu$.
\item[(viii)] By $\eta\wedge\nu$, we mean the $\unlhd$-maximal $\xi\in\kappa^{<\lambda}$ such that $\xi\unlhd\eta$ and $\xi\unlhd\nu$.
\item[(ix)] By $l(\eta)$, we mean the domain of $\eta$.
\item[(x)] By $\eta\lex\nu$, we mean that either $\eta\lhd\nu$, or $\eta\perp\nu$ and $\eta(l(\eta\wedge\nu))<\nu(l(\eta\wedge\nu))$. 
\item[(xi)] By $\eta\coc\nu$, we mean $\eta\cup\{(l(\eta)+i,\nu(i)):i< l(\nu)\}$. Note that $\emptyset\coc\nu$ is just $\nu$.
\item[(xii)] By $\eta\lhd_i\nu$, we mean $\eta^\frown(i)\unlhd\nu$.
\item[(xiii)] If $l(\eta)$ is a successor ordinal, then $t(\eta):=\eta(l(\eta)-1)$ and $\eta^-\!:=\eta|_{l(\eta)-1}$.
\item[(xiv)] $C(\eta):=\{\eta'\in \kappa^{<\lambda}: \eta'\unrhd\eta\}$.
\item[(xv)] $P_\alpha:=\{\eta\in\kappa^{<\lambda}: l(\eta)=\alpha\}$.
\end{enumerate}
\smallskip
Let $X\subseteq \kappa^{<\lambda}$.
\begin{enumerate}
\item[(xvi)]
By $\eta\coc X$ and $X\coc\eta$, we mean $\{\eta\coc x:x\in X\}$ and $\{x\coc\eta:x\in X\}$ respectively.
\end{enumerate}
\smallskip
Let $\eta_0,...,\eta_n\in\kappa^{<\lambda}$.
\begin{enumerate}
\item[(xvii)] By $\cl(\eta_0,...,\eta_n)$, we mean a tuple $(\eta_0\wedge\eta_0,...,\eta_0\wedge\eta_n,...,\eta_n\wedge\eta_0,...,\eta_n\wedge\eta_n)$. 
\end{enumerate}
\end{notation}

Recently, several new dividing lines have been introduced that encompass both NTP$_1$ and NTP$_2$, and their basic properties have gradually started to be investigated.

\begin{definition}
Let $X\subseteq \omega^{<\omega}$.
\begin{itemize}
\item[(i)] $X$ is called an {\it path} if $X$ is linearly ordered by $\unlhd$.
\item[(ii)] $X$ is called an {\it antichain} if $\eta\perp\nu$ for all $\eta,\nu\in X$.
\item[(iii)] $X$ is called a {\it descending comb} if it is an antichain and there is some enumeration $\{\eta_i\}_{i<\kappa}$ of $X$ such that $\eta_{i+1}\lex \eta_i$ and $\bigwedge_{i\le j}\eta_j\lhd \bigwedge_{i+1\le j}\eta_j $ for all $i<\kappa$.
\item[(iv)] $X$ is called a {\it left-leaning path} if $\emptyset\notin X$ and there is some enumeration $\{\eta_i\}_{i<\kappa}$ of $X$ such that $\eta_i^- {^\frown} (j)\lhd \eta_{i+1}$ for some $j\le t(\eta_i)$ for all $i<\kappa$. 
\item[(v)] $X$ is called a {\it right-veering path} if $\emptyset\notin X$ and there is some enumeration $\{\eta_i\}_{i<\kappa}$ of $X$ such that $\eta_i^- {^\frown} (j)\unlhd \eta_{i+1}$ for some $j> t(\eta_i)$ for all $i<\kappa$. 
\end{itemize}
\end{definition}

\begin{definition}\label{def: ATP CTP BTP} 
Let $T$ be a complete theory.
\begin{itemize}
\item[(i)] \cite{AK20} We say a theory $T$ has {\it ATP} if there exist $\varphi(x,y)$ and $(a_\eta)_{\eta\in\omega^{<\omega}}$ such that
\begin{itemize}
\item[$\ast$] $\{\varphi(x,a_\eta)\}_{\eta\in X}$ is consistent for any antichain $X$ in $\omega^{<\omega}$
\item[$\ast$] $\{\varphi(x,a_\eta),\varphi(x,a_\nu)\}$ is inconsistent for any $\eta,\nu\in\omega^{<\omega}$ with $\eta\lhd\nu$.
\end{itemize}

\smallskip

\noindent If $T$ does not have ATP, then we say $T$ is {\it NATP}.

\smallskip

\item[(ii)] \cite{Mut22}\footnote{In the original definition by Mutchnik \cite{Mut22}, it is named DCTP$_2$, and the underlying index set is $2^{<\omega}$. It is easy to check that having DCTP$_2$ and having CTP are equivalent.} We say a theory $T$ has {\it CTP} if there exist $\varphi(x,y)$, $(a_\eta)_{\eta\in\omega^{<\omega}}$, and $k<\omega$ such that
\begin{itemize}
\item[$\ast$] $\{\varphi(x,a_\eta)\}_{\eta\in X}$ is consistent for any descending comb $X$ in $\omega^{<\omega}$,
\item[$\ast$] $\{\varphi(x,a_{\eta|_n})\}_{n<\omega}$ is $k$-inconsistent for any $\eta\in\omega^\omega$.
\end{itemize}

\smallskip

\noindent If $T$ does not have CTP, then we say $T$ is {\it NCTP}.

\smallskip

\item[(iii)] \cite{KR23}\footnote{In the original definition by Kruckman and Ramsey \cite{KR23}, the underlying index set is $\omega^{<\omega}\setminus{\emptyset}$. For technical convenience, we will instead use a formulation indexed by $\omega^{<\omega}$. It is not difficult to check that this version is equivalent to the original one.} We say a theory $T$ is {\it BTP} if there exist $\varphi(x,y)$, $(a_\eta)_{\eta\in\omega^{<\omega}}$, and $k<\omega$ such that
\begin{itemize}
\item[$\ast$] $\{\varphi(x,a_\eta)\}_{\eta\in X}$ is consistent for any left-leaning $X$ in $\omega^{<\omega}$,
\item[$\ast$] $\{\varphi(x,a_\eta)\}_{\eta\in X}$ is $k$-inconsistent for any right-veering $X$ in $\omega^{<\omega}$.
\end{itemize}

\smallskip

\noindent If $T$ does not have BTP, then we say $T$ is {\it NBTP}.
\end{itemize}
\end{definition}

The relationships between NATP and NCTP, and between NBTP and the other dividing lines, are known as follows. Their relationships with stability, simplicity, NIP, and NSOP$_n$ for $n>2$ are illustrated in \Cref{fig: relations between dividing lines}.

\begin{fact}
Let $T$ be a complete theory.
\begin{itemize}
\item[(i)] \cite{KR23} If $T$ is NTP$_1$ or NTP$_2$, then it is NBTP.
\item[(ii)] \cite{Han23}\cite{KR23} If $T$ is NBTP, then it is NCTP.
\item[(iii)] \cite{Mut22} If $T$ is NCTP, then it is NATP.
\end{itemize}
\end{fact}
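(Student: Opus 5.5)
The statement is a compilation of three implications from the literature. I would prove each by contraposition, turning a witness of the stronger tree property into a witness of the weaker one. The main tool is re-indexing the witnessing tree along a suitable map $g$ of index trees, together with the modeling property for strongly indiscernible trees where indiscernibility is needed. Throughout, $\varphi(x,y)$ and $k$ are fixed by the given witness.

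\emph{(i), BTP implies TP$_2$.} Given a BTP witness $(a_\eta)_{\eta\in\omega^{<\omega}}$, I would set $b_{i,j}:=a_{0^i{}^\frown(j)}$ for $i,j<\omega$.

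\begin{itemize}
\item Each row $\{0^i{}^\frown(j)\}_{j<\omega}$, enumerated increasingly in $j$, is right-veering. Indeed, $\eta_{j+1}=\eta_j^-{}^\frown(j+1)$ with $j+1>t(\eta_j)$. So each row is $k$-inconsistent.
\item Given $f:\omega\to\omega$, put $\eta_i:=0^i{}^\frown(f(i))$. Then $\eta_i^-{}^\frown(0)=0^{i+1}\lhd \eta_{i+1}$ strictly, and $0\le t(\eta_i)$. So every vertical path $\{\eta_i\}_{i<\omega}$ is left-leaning, hence consistent.
\end{itemize}

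This gives TP$_2$ directly.

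\emph{(i), BTP implies TP$_1$.} First I would note that any $\unlhd$-chain $\eta_0\lhd\eta_1\lhd\cdots$ is left-leaning, taking $j=t(\eta_i)$. Hence every branch $\{\varphi(x,a_{\eta|_n})\}_n$ is consistent, while every sibling set is $k$-inconsistent. Both conditions are expressible in the strong tree language. So by the modeling property I may replace $(a_\eta)$ with a strongly indiscernible tree having the same two properties. Strong indiscernibility makes all pairs of incomparable nodes realize one type. I would then run the standard Kim--Kim argument: $k$-inconsistency of the siblings $(0),\dots,(k-1)$, combined with consistency along branches, yields, after passing to the conjunction of a suitable finite path segment, a formula witnessing that incomparable pairs are inconsistent. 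This is exactly TP$_1$.

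\emph{(ii), CTP implies BTP.} Given a CTP witness, I would build $g:\omega^{<\omega}\to\omega^{<\omega}$ such that:
\begin{itemize}
\item every right-veering path maps into a single branch, so it inherits $k$-inconsistency;
\item every left-leaning path maps onto a descending comb, so it is consistent.
\end{itemize}
Concretely, $g$ is defined by recursion. Moving right from $\eta$ to a sibling $\eta^-{}^\frown(j)$ with $j>t(\eta)$ should extend $g(\eta)$ along the same branch. Stepping down-left to $\eta^-{}^\frown(j)^\frown\nu$ with $j\le t(\eta)$ should branch off to the lexicographic left of the current branch, below a fresh meet. I would encode a node by interleaving a ``depth'' coordinate, which keeps right moves on one chain, with a reversed coordinate, which makes left moves $\lex$-decreasing with increasing meets.

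\emph{(iii), CTP implies ATP.} I would use a similar embedding $h:2^{<\omega}\to\omega^{<\omega}$ that reverses the order of children. With it, an antichain, after the standard reduction to combs from Ramsey arguments on strongly indiscernible trees, maps to a descending comb, and chains map to chains. The $k$-inconsistency of paths is then upgraded to 2-inconsistency of comparable pairs by conjoining path segments under strong indiscernibility, as in (i).

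The main obstacle is the TP$_1$ half of (i), specifically the upgrade from $k$-inconsistency of siblings to inconsistency of incomparable pairs. The combinatorial bookkeeping in defining $g$ in (ii) is the second difficulty.
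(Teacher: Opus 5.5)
The paper does not prove this Fact; it only quotes it from the literature, so I am judging your proposal on its own. Your TP$_2$ half of (i) is correct. Your plan for (ii) also works. For instance, put $\rho(\emptyset)=\emptyset$, and for $\eta\neq\emptyset$ put $\rho(\eta)=\rho(\eta^-)^\frown(2)^{t(\eta)}{}^\frown(1)$ and $g(\eta)=\rho(\eta^-)^\frown(2)^{t(\eta)+1}$. Then a right-veering path goes to a $\lhd$-increasing chain and a left-leaning path goes to a descending comb. There are, however, two genuine problems.

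\textbf{The TP$_1$ half of (i).} After your first step you keep only two facts: branches are consistent and sibling sets are $k$-inconsistent. That is just Shelah's tree property, and nothing later uses more. So if the rest were valid, you would have shown TP $\Rightarrow$ TP$_1$. That is false: a non-simple NTP$_1$ theory (e.g.\ a non-simple NSOP$_1$ theory such as $T^*_{feq}$) has TP, via TP$_2$, but not TP$_1$.

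The step that breaks is the appeal to the modeling property. $k$-inconsistency of sibling sets is not expressible in $\{\unlhd,<_{lex},\wedge\}$. The set $\{\mu^\frown(i_0),\dots,\mu^\frown(i_{k-1})\}$ has the same quantifier-free type as every fan $\nu_0<_{lex}\dots<_{lex}\nu_{k-1}$ whose pairwise meets all equal $\mu$. So a strongly indiscernible tree locally based on yours inherits inconsistent sibling sets only if all $k$-fans of the original tree are inconsistent. That is weak $k$-TP$_1$, which is exactly the input the Kim--Kim argument needs.

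BTP does not hand you this. A non-final member of a right-veering path is a child of its meet with the next member. Hence no two members of a fan lying strictly below distinct children of the meet can occur on a common right-veering path. Getting TP$_1$ requires genuinely using that right-veering paths contain descendants of right siblings, with an indiscernibility notion that preserves such patterns. Compare the paper's use of s- and level-s-indiscernibility, rather than strong indiscernibility, when handling BTP. Your proposal does not supply this step.

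\textbf{(iii) is argued in the wrong direction.} ``NCTP implies NATP'' is the contrapositive of ``ATP implies CTP'', not of ``CTP implies ATP''. The correct direction is immediate from the definitions. If $\varphi$ and $(a_\eta)$ witness ATP, every descending comb is an antichain, hence consistent. Any two members of $\{\eta|_n\}_{n<\omega}$ are $\lhd$-comparable, so every path is $2$-inconsistent. Thus the same data witness CTP with $k=2$.

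What you sketch is the converse, NATP $\Rightarrow$ NCTP, which the paper does not claim. Your sketch does not establish it either. Conjoining along path segments turns a descending comb of nodes into a comb of chain segments. That configuration contains comparable pairs, and CTP says nothing about its consistency.
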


\begin{figure}
\[
\begin{tikzpicture}[>=stealth, x=1.6cm, y=1.2cm]
\node (stable) at (0.15,-1) {stable};
\node (NIP) at (0.15,0) {NIP};
\node (NTP)   at (1.1,-1) {simple};
\node (NTP2)   at (1.1,0) {NTP$_2$};
\node (NTP1)   at (4,-1) {NTP$_1$};
\node (NSOP3)   at (5,-1) {NSOP$_3$};
\node (cdots)   at (5.85,-1) {$\cdots$};
\node (NBTP)   at (2.05,-0.66) {NBTP};
\node (NCTP)   at (3.03,-0.33) {NCTP};
\node (NATP)   at (4,0) {NATP};

\draw[->] (stable) -- (NIP);
\draw[->] (stable) -- (NTP);
\draw[->] (NTP) -- (NTP2);
\draw[->] (NTP) -- (NTP1);
\draw[->] (NIP) -- (NTP2);
\draw[->] (NTP1) -- (NSOP3);
\draw[->] (NSOP3) -- (cdots);
\draw[->] (NTP) -- (NBTP);
\draw[->] (NTP1) -- (NBTP);
\draw[->] (NTP2) -- (NBTP);
\draw[->] (NTP1) -- (NCTP);
\draw[->] (NTP2) -- (NCTP);
\draw[->] (NTP1) -- (NATP);
\draw[->] (NTP2) -- (NATP);
\draw[->] (NBTP) -- (NCTP);
\draw[->] (NCTP) -- (NATP);
\end{tikzpicture}
\]
\caption{Relations between dividing lines}
\label{fig: relations between dividing lines}
\end{figure}
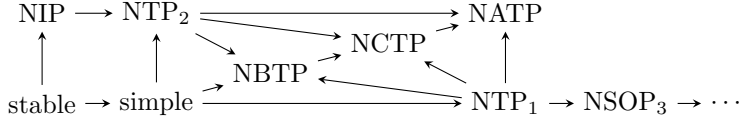

\subsection{Generalized indiscernibles}

\begin{definition}
Let $\CL^*$ be a language, $\CI^*$ an $\CL^*$-structure. Let $\CL$ be another language, $T$ an $\CL$-theory, and $\mathbb{M}$ a sufficiently saturated model of $T$. Fix a cardinal $\kappa$, a small subset $A$ of $\mathbb{M}$, and an $\CI^*$-indexed set $(a_i)_{i\in\CI^*}$ such that $a_i\in \mathbb{M}^\kappa$ for each $i\in\CI^*$. We say $(a_i)_{i\in\CI^*}$ is {\it $\CI^*$-indiscernible over $A$} if 
\[ \qftp_{\CL^*}^{\CI^*}(I)=\qftp_{\CL^*}^{\CI^*}(J)\;\Rightarrow\; \tp_\CL^\mathbb{M}( (a_i)_{i\in I}/A )=\tp_\CL^\mathbb{M}( (a_i)_{i\in J}/A ) \]
for all $I,J\subseteq\CI^*$.
\end{definition}

\begin{definition}\cite[Definition 3.1.1.]{Sco10}\label{def:generalized indiscernible}
Let $\CL^*$ and $\CI^*$ be as above. We say $\CI^*$-indiscernibles have the {\it modeling property} if for any language $\CL$, an $\CL$-theory $T$, a sufficiently saturated model $\mathbb{M}\models T$, a small subset $A$ of $\mathbb{M}$, and an $\CI^*$-indexed set $(a_i)_{i\in\CI^*}\subseteq\mathbb{M}$ with $|a_i|=|a_j|$ for all $i,j\in\CI^*$, we can always find $(b_i)_{i\in\CI^*}$ in $\mathbb{M}$ such that 
\begin{itemize}
\item[(i)] $(b_i)_{i\in\CI^*}$ is $\CI^*$-indiscernible over $A$,
\item[(ii)] for any $I\subseteq\CI^*$ and $\varphi(x)\in \tp_\CL^\mathbb{M}(  (b_i)_{i\in I} /A)$, there exists $J\subseteq\CI^*$ such that $\qftp_{\CL^*}^{\CI^*}(I)=\qftp_{\CL^*}^{\CI^*}(J)$ and $\mathbb{M}\models \varphi((a_i)_{i\in J})$.
\end{itemize}

If some $\CI^*$-indexed set satisfies (ii), then we say the $\CI^*$-indexed set is {\it locally based} on $(a_i)_{i\in\CI^*}$.
\end{definition}

\begin{notation}
$\CL_0 := \{\unlhd, \lex, \wedge\}$. We can regard $\omega^{<\omega}$ as an $\CL_0$-structure by giving interpretations of the symbols of $\CL_0$ as in \Cref{notation: language of trees}. We will write a subscript $\CL_0$ on $\omega^{<\omega}$, namely $\omega^{<\omega}_{\CL_0}$, when we want to emphasize the language that the structure is based on. We say that $(a_\eta)_{\eta \in \omega^{<\omega}}$ is {\it strongly indiscernible} if it is $\omega^{<\omega}_{\CL_0}$-indiscernible. 

Similarly, we can give an $\CL_s:=\{\unlhd,\lex,\wedge\}\cup\{P_i\}_{i<\omega}$-structure on $\omega^{<\omega}$. We say that $(a_\eta)_{\eta \in \omega^{<\omega}}$ is {\it s-indiscernible} if it is $\omega^{<\omega}_{\CL_s}$-indiscernible.
\end{notation}

\begin{fact}\cite{TT12}\cite{KKS14}\label{fact: s-indisc}
Strong indiscernible and s-indiscernible have the modeling property.
\end{fact}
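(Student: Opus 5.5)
This statement is \Cref{fact: s-indisc}, the modeling property for strong indiscernibles and for s-indiscernibles. The plan is to cite it as a known result (\cite{TT12} for $\CL_0$, \cite{KKS14} for $\CL_s$). If a self-contained argument is wanted, I would reduce it to a Ramsey property of the finite substructures of the index structure, via the standard Scow correspondence. The correspondence states that $\CI^*$-indiscernibles have the modeling property if and only if the class of finite substructures of $\CI^*$ is a Ramsey class.

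For a direct compactness proof, the plan has three steps.

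First, add constants $(c_\eta)_{\eta\in\omega^{<\omega}}$ to the language. Consider the theory $T'$ consisting of:
\begin{itemize}
\item the elementary diagram over $A$;
\item for every pair of finite tuples $\bar\eta,\bar\nu$ with the same quantifier-free $\CL_0$-type (respectively $\CL_s$-type), and every $\CL(A)$-formula $\varphi$, the axiom $\varphi(c_{\bar\eta})\leftrightarrow\varphi(c_{\bar\nu})$;
\item for each finite $\bar\eta$ and each $\CL(A)$-formula $\psi$ that fails on every $(a_i)_{i\in\bar\nu}$ with $\bar\nu\equiv^{qf}\bar\eta$, the axiom $\neg\psi(c_{\bar\eta})$.
\end{itemize}
A realization of $T'$ in $\mathbb{M}$ is $\CI^*$-indiscernible over $A$ and locally based on $(a_\eta)$. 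So it suffices to show $T'$ is finitely satisfiable.

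Second, take a finite fragment of $T'$. It mentions finitely many formulas $\varphi_1,\dots,\varphi_m$ and finitely many indices in a finite substructure $B$. Colour each copy of a fixed finite substructure $D$ inside $\omega^{<\omega}$ by the $\varphi$-type of its $a$-tuple; this uses finitely many colours. Suppose we find a copy $B'\cong B$ on which the colour of every copy of each relevant $D$ depends only on its isomorphism type. Then interpreting $c_\eta$ as $a_{f(\eta)}$, for the embedding $f\colon B\to B'$, satisfies the fragment. The local-basedness axioms hold automatically, since every tuple is realized by some $a_{\bar\nu}$ with the same quantifier-free type. Handling several $D$'s at once is done by iterating, or by taking the product colouring.

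Third, and this is the main obstacle, establish the Ramsey property itself. For every finite substructures $D\subseteq B$ and every $k$, there must be a copy of $B$ in $\omega^{<\omega}$ that is monochromatic for all $k$-colourings of copies of $D$. For $\CL_0$ (with $\wedge$ and $\lex$), I would follow Takeuchi–Tsuboi: prove it by induction on the height of $B$, using the Milliken tree theorem (strong subtrees of finitely branching trees) together with the classical finite Ramsey theorem to handle $\lex$-ordered siblings. The meet-closure given by $\wedge$ lets one identify quantifier-free types with embedding types of meet-closed subtrees.

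For $\CL_s$, the level predicates $P_i$ force copies to preserve levels exactly. I would therefore apply the level-preserving version (Kim–Kim–Scow), which colours level-sets uniformly. This uses a product Ramsey / Halpern–Läuchli type argument across the finitely many relevant levels, in place of Milliken.

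Given that both Ramsey statements are established in the cited works, the compactness reduction above completes the proof.
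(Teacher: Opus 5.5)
Your proposal matches the paper, which gives no argument of its own and simply cites Takeuchi--Tsuboi for strong indiscernibles and Kim--Kim--Scow for s-indiscernibles; your optional compactness reduction to a Ramsey property of the finite $\CL_0$- (resp.\ $\CL_s$-) substructures is the standard mechanism behind those results. The only slip is in how you state that Ramsey property: it should read ``for every $k$-colouring of the copies of $D$ there is a copy of $B$ all of whose sub-copies of $D$ get one colour'' (not one copy of $B$ monochromatic for all colourings), which is how you actually use it in your second step.
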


\begin{notation}
Let $\CL^*$ be a language, $\CI^*$ an $\CL^*$-structure, and $I$ its underlying set. $\age_{\CL^*}(\CI^*)$ is the set of all $\CL^*$-substructures of $\CI^*$ which are generated by finite subsets of $I$. If it is clear from the context, then we omit the subscript $\CL^*$ and just write $\age(\CI^*)$.
\end{notation}

\begin{definition}
For a language $\CL^*$, an $\CL^*$-structure $\CI^*$ is said to be {\it locally finite} if every finitely generated substructure of $\CI^*$ is  finite.
\end{definition}

\begin{fact}\cite{MP23}\label{fact: age(I)=age(J)}
Let $\CL^*$ be a language, $\CI^*$ and $\CJ^*$ be locally finite $\CL^*$-structures with $\age(\CI^*)=\age(\CJ^*)$. If $\CI^*$-indiscernible has the modeling property, then $\CJ^*$-indiscernible also has the modeling property.
\end{fact}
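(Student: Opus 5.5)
We would prove the Fact by showing that, for a locally finite $\CL^*$-structure $\CI^*$, the modeling property for $\CI^*$-indiscernibles is equivalent to a combinatorial Ramsey condition that depends only on $\age(\CI^*)$. Since $\age(\CI^*)=\age(\CJ^*)$, the Fact then follows immediately. Concretely, the condition $(R)$ we aim for is:
\begin{itemize}
\item[$(R)$] For all finite $A\subseteq B$ in $\age(\CI^*)$ and all $k<\omega$, there is $C\in\age(\CI^*)$ such that for every colouring of the copies of $A$ in $C$ with $k$ colours, there is a copy $B'\subseteq C$ of $B$ on which the colouring depends only on the quantifier-free type.
\end{itemize}
Here a copy of $A$ means a tuple from $C$ realizing $\qftp(A)$ for a fixed enumeration of $A$. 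This is in the spirit of Scow's characterization \cite{Sco10}. Local finiteness is what makes the copies of $A$ inside a finite $C$ finite in number, and makes substructures generated by finite sets themselves finite.

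\textbf{Step 1: $(R)\Rightarrow$ modeling property, for $\CJ^*$.} Fix $(a_j)_{j\in\CJ^*}$ and a small set $A_0$. Let $\Gamma$ be the set of formulas in variables $(x_j)_{j\in\CJ^*}$ expressing two things:
\begin{itemize}
\item $\CJ^*$-indiscernibility over $A_0$;
\item local basedness, i.e.\ for each finite $I\subseteq \CJ^*$, the formulas $\neg\varphi((x_i)_{i\in I})$ for every $\varphi$ over $A_0$ that fails on $(a_j)_{j\in J}$ for every $J$ with $\qftp(J)=\qftp(I)$.
\end{itemize}
By compactness it suffices to realize finitely many of these formulas. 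These involve a finite set $\Delta$ of formulas, finitely many finite tuples, and their generated substructure $B\in\age(\CJ^*)$; the tuples have only finitely many qf-types $A_1,\dots,A_m\subseteq B$. We colour copies of $A_1$ in a finite structure by the $\Delta$-type over $A_0$ of the corresponding $a$-tuple, and apply $(R)$ to obtain some $C_1$. We then iterate with $C_1$ in place of $B$ for $A_2$, and so on, producing $C_m$. Since $C_m\in\age(\CJ^*)$, it embeds into $\CJ^*$. We then pull back along the homogeneous copy $B'$ of $B$ and set $x_j:=a_{g(j)}$, where $g$ is the embedding $B\cong B'$. This substitution preserves qf-types, so the local basedness formulas hold automatically, and homogeneity gives the indiscernibility formulas.

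\textbf{Step 2: modeling property $\Rightarrow(R)$, for $\CI^*$.} Suppose $(R)$ fails for some $A,B,k$, so that every $C\in\age(\CI^*)$ admits a bad colouring. We would build a structure in an expanded language $\CL$ containing $\CL^*$ together with $k$ relation symbols, whose universe contains $\CI^*$, and in which every finite substructure carries a bad colouring. Such a structure exists by compactness plus local finiteness, using the finitary $\CL^*$-diagram. We take $a_i:=i$ in a saturated elementary extension and apply the modeling property to get an $\CI^*$-indiscernible $(b_i)$ locally based on $(a_i)$. Indiscernibility makes the colouring constant on copies of $A$ within the copy of $B$ given by $(b_i)$. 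Local basedness transfers, via the qf-type of a finite generating set, both that homogeneity and the structure of $B$ back to some $(a_j)_{j\in J}$, which is a copy of $B$ in $\CI^*$ with monochromatic copies of $A$. This contradicts badness.

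The main obstacle we expect is Step 2. The delicate points are, first, ensuring that the $b_i$ themselves reflect the $\CL^*$-structure and the colours of $\CI^*$, so that the relevant $\CL^*$-relations must be part of the type; and second, handling automorphisms of $A$ correctly. We would first try to handle the latter by always working with enumerated copies, i.e.\ qf-types of tuples, rather than unordered substructures.
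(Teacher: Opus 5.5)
Your proposal is correct. The paper gives no argument of its own here and only cites \cite{MP23}. The standard proof of this kind of transfer is exactly your route: for a locally finite index structure, the modeling property is equivalent to the embedding Ramsey property of its age (Scow's characterization, without the assumption of a definable order), and that property visibly depends only on the age.

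The two points you flag as delicate in Step 2 are not real obstacles.

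\emph{Which tuple to use.} Apply local basedness to an enumeration $\bar d$ of all of $B_0$ (finite by local finiteness), not to a generating tuple. Use the single quantifier-free formula
\[
\bigwedge_{c<k}\ \bigwedge_{e,e'}\bigl(R_c(\bar x_{e(\bar A)})\leftrightarrow R_c(\bar x_{e'(\bar A)})\bigr),
\]
where $e,e'$ range over the finitely many embeddings $A\to B_0$ and $\bar A$ is the fixed enumeration of $A$. The $b$'s satisfy this formula by $\CI^*$-indiscernibility, since all the tuples $e(\bar A)$ have the same quantifier-free type. Local basedness then returns $J$ with $\qftp(J)=\qftp(\bar d)$. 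Such a $J$ is automatically a copy of $B$, and its copies of $A$ are exactly the images of the $e$'s, so it is monochromatic. This contradicts badness.

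\emph{The $\CL^*$-structure and automorphisms.} The $b_i$ never need to carry any $\CL^*$-structure, so $\CL$ can consist of the colour predicates alone. Indiscernibility is defined through $\qftp$ of enumerated tuples, so counting copies as embeddings is exactly the right notion, and automorphisms of $A$ cause no trouble.

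\emph{Step 1.} The structures $C_i$ can be chosen before any colouring is defined, with $C_i$ obtained from the pair $(A_i,C_{i-1})$ and $C_0=B$. After embedding $C_m$ into $\CJ^*$, you descend through homogeneous copies of $C_{m-1},\dots,C_1$ down to $B'$. Homogeneity already obtained is inherited by each sub-copy.
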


\subsection{Mekler's construction}

In this subsection, we summarize the basic facts concerning Mekler groups, which are the main objects of study in this paper. Our exposition largely follows the treatment in \cite{Hod93}.

\begin{definition}
A graph is said to be {\it nice} if 
\begin{itemize}
\item[(i)] it has at least two elements,
\item[(ii)] for any two distinct vertices $a$ and $b$, there exists vertex $c$ such that $c\neq a$, $c\neq b$, $c$ is adjacent to $a$, and $c$ is not adjacent to $b$,
\item[(iii)] the graph has no cycle of order $3$ nor $4$.
\end{itemize}
\end{definition}

\begin{fact}\cite[Theorem 5.5.1]{Hod93} 
Every countably infinite structure $M$ in a countable language can be bi-interpreted in a countably infinite nice graph.
\end{fact}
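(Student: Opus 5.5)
The plan follows the standard argument in \cite[\S5.5]{Hod93}. It has two stages: first bi-interpret $M$ with some countable graph, then modify that graph uniformly so that it becomes nice, without losing the bi-interpretation.

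\emph{Stage 1: coding $M$ as a graph.} Fix an enumeration $(R_k)_{k<\omega}$ of the relation symbols of $M$, replacing functions and constants by their graphs, so the language is relational; let $n_k$ be the arity of $R_k$.
\begin{itemize}
\item[$\ast$] Take the elements of $M$ as \emph{base vertices}, and mark each one by attaching a rigid gadget of a fixed shape, say a cycle of length $5$ through it.
\item[$\ast$] For each $k$ and each tuple $\bar a=(a_1,\dots,a_{n_k})\in R_k^M$, add a new \emph{tuple vertex} $t_{k,\bar a}$.
\item[$\ast$] Join $t_{k,\bar a}$ to $a_i$ by a path of length $\ell(k,i)$, where the lengths $\ell(k,i)\ge 5$ are pairwise distinct. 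These lengths must also differ from the lengths appearing in the marking gadgets.
\end{itemize}
In the resulting graph $G$, each piece of structure is then $\emptyset$-definable:
\begin{itemize}
\item[$\ast$] the set of base vertices, as the vertices lying on a $5$-cycle of the prescribed local shape;
\item[$\ast$] the tuple vertices;
\item[$\ast$] the relation ``$x$ is joined to $t$ by a path of length $\ell(k,i)$'';
\item[$\ast$] hence each $R_k$, by a formula stating that some tuple vertex is linked to $x_i$ by the $\ell(k,i)$-path for every $i$.
\end{itemize}
This gives an interpretation of $M$ in $G$ whose domain is the base vertices with equality.

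Conversely, $G$ is interpretable in $M$. Each vertex of $G$ is named by a tuple from $M$ together with finitely many tags, such as the index $k$ and the position along the path; the tags can be coded by equality patterns among tuple entries, and the relevant sorts are disjoint definable subsets of powers of $M$. Adjacency is then definable in $M$. Because $M$ is infinite, this can be arranged without parameters.

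\emph{Stage 2: making $G$ nice.} The key observation is that any graph with minimum degree $\ge 2$ and no cycles of length $3$ or $4$ satisfies condition (ii) of niceness. Take distinct $a,b$.
\begin{itemize}
\item[$\ast$] If $a$ and $b$ are adjacent, any neighbour $c\ne b$ of $a$ works, since there are no triangles.
\item[$\ast$] If not, $a$ and $b$ have at most one common neighbour, since there are no $4$-cycles. As $a$ has at least $2$ neighbours, one of them, call it $c$, is not adjacent to $b$; moreover $c\ne b$ because $b$ is not adjacent to $a$.
\end{itemize}
Our gadgets use only paths and cycles of length $\ge5$, and these meet only at base and tuple vertices. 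So the construction should already have no $3$- or $4$-cycles, and minimum degree $\ge 2$ holds because every vertex lies on a gadget cycle or on an internal path vertex. Where this fails, we attach extra $5$-cycles uniformly. If some two vertices could be linked along two short routes, we subdivide every edge into a path of length $3$, adjusting the distinguishing lengths accordingly. Countable infinity is immediate.

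\emph{Stage 3: checking the bi-interpretation.} It remains to verify that the two composite interpretations, $M\to G\to M$ and $G\to M\to G$, are definably isomorphic to the identity. For $M$ this is clear: the base vertices are literally the elements of $M$. For $G$ one must check that the map sending a vertex to its code in $M$, and from there back to a vertex of the copy of $G$ inside $G$, is definable. This follows because every vertex is within bounded distance of the base vertices that code it, in a definable way.

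I expect this last verification to be the main obstacle. It is routine in principle but bookkeeping-heavy. The delicate point is choosing the gadget lengths so that base vertices, tuple vertices, and positions along paths are all distinguishable by $\emptyset$-formulas, while still keeping girth $\ge 5$ after the niceness modifications.
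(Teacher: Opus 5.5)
There is a genuine gap. It sits in Stage 1, where you claim that $G$ is interpretable in $M$, and it cannot be repaired, because the statement fails for infinite languages. (The paper only cites this fact and gives no proof.)

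\textbf{The failing step.} An interpretation of $G$ in $M$ consists of finitely many formulas: one domain formula defining a subset of a fixed power $M^n$, one for equality, and one for adjacency. With infinitely many relation symbols $R_k$, the vertices of your $G$ fall into infinitely many sorts. These are indexed by $k<\omega$ and by positions along paths of unbounded lengths $\ell(k,i)$, and they are coded in powers $M^{n_k}$ with possibly unbounded $n_k$. Their union is not a definable set, and equality patterns in a fixed $M^n$ can code only finitely many tags. For the same reason, the ``bounded distance'' claim in Stage 3 fails.

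\textbf{Why no recoding helps.} Suppose $\Gamma$ interprets a graph $G$ in $M$ and $\Delta$ interprets $M$ in $G$ as part of a bi-interpretation.
\begin{enumerate}
\item The finitely many formulas of $\Gamma$ mention only a finite sublanguage $L_0$. So in the copy of $M$ inside $M$ obtained by composing $\Delta$ with $\Gamma$, every $R_k$ is defined by an $L_0$-formula.
\item The isomorphism between $M$ and this copy is defined by a single formula. That formula mentions a further finite $L_1$, and finitely many parameters if you allow them.
\item Pulling back along this isomorphism, every $R_k^M$ is definable in the reduct of $M$ to $L_0\cup L_1$, over those finitely many parameters.
\end{enumerate}
Now take $M=(\omega,P_n)_{n<\omega}$ with $P_n=\{n\}$. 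In a reduct to finitely many $P_n$, over finitely many parameters, only finitely many singletons are definable. This is because every permutation of $\omega$ fixing those finitely many points is an automorphism. So not every $P_n$ is definable, a contradiction.

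\textbf{What survives with a finite language.} The result needs a finite language, which is the setting in which it is normally stated. There your plan is essentially the standard argument. You code tuples by vertices joined to their entries by paths of distinct lengths and keep girth at least $5$. Your observation that minimum degree at least $2$ plus no $3$- or $4$-cycles gives condition (ii) of niceness is correct.

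Two details still need repair.
\begin{enumerate}
\item A single $5$-cycle through a base vertex does not make that vertex definable when the element lies in no tuple of any relation. That cycle is then a whole connected component, and rotating it is an automorphism of $G$. One fix is to use two cycles of different lengths meeting only at the base vertex.
\item Tuple vertices of unary relations have degree $1$. Patching them with extra $5$-cycles would make them indistinguishable from base vertices under your own definition. Any auxiliary gadget needs a length used nowhere else in the construction.
\end{enumerate}
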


\begin{definition}\cite{Mek81},\cite[Exercise 9.23 and Appendix 3]{Hod93}
Fix an odd prime $p$. For a countably infinite nice graph $C:=\{c_i\}_{i<\omega}$, let $F_p(C):=(X\times Y,\ast)/W$ where $X:=\bigoplus_{i<\omega}x_i(\mathbb{Z}/p\mathbb{Z})$, $Y:=\bigoplus_{i<j<\omega}y_{i,j}(\mathbb{Z}/p\mathbb{Z})$ for indeterminants $x_i$ and $y_{i,j}$, the group operation $\ast$ is defined by 
\[
\left(\sum_{i<\omega}\alpha_i x_i, c\right)\ast \left(\sum_{i<\omega}\beta_i x_i, d\right):=\left(\sum_{i<\omega} (\alpha_i+\beta_i)x_i, c+d-\!\!\! \sum_{i<j<\omega}\!\! \beta_i\alpha_j y_{i,j}\right)
\]
for integers $\alpha_i$, $\beta_i$ such that $0\le \alpha_i,\beta_i<p$, and $W:=\la \{[(x_i,0),(x_j,0)]\ast (0,y_{i,j})^{-1}:i<j<\omega\}\ra$ for the commutator $[(x_i,0),(x_j,0)]$ of $(x_i,0)$ and $(x_j,0)$. 

{\it The Mekler group of $C$}, denoted by $G_p(C)$, is defined as follows;
\[
G_p(C) := F_p(C)/V\text{ where } V:=\la \{(0,y_{i,j}):i<j<\omega, C\models R(c_i,c_j)\}\ra.
\]
We write $G(C):=G_p(C)$ if $p$ is clear from the context. Note that $G(C)$ is 2-nilpotent and of exponent $p$.
\end{definition}

\begin{definition}
Let $C$ be a countably infinite nice graph and $G:=G(C)$.
Let $g$ and $h$ be elements in $G$, $C(g)$ and $C(h)$ the centralizers of $g$ and $h$, respectively, and $Z:=Z(G)$ the center of $G$.
\begin{enumerate}
  \item[(i)] By $g \sim h$, we mean $C(g) = C(h)$.
  \item[(ii)] By $g \approx h$, we mean that $h = g^{r}c$ for some $c \in Z$ and some non-negative integer $r$.
  \item[(iii)] By $g \equiv_Z h$, we mean $gZ = hZ$.
\end{enumerate}
\end{definition}

\begin{definition}
For fixed nice graph $C$ and $G:=G(C)$, let $g$ be an element of $G$ and $Z:=Z(G)$ the center of $G$.
\begin{enumerate}
  \item[(i)] We say that $g$ is {\it isolated} if $g \approx h$ for any $h \in C(g) \cap (G \setminus Z)$. Otherwise, we say that $g$ is {\it non-isolated}.
  \item[(ii)] $n(g)$ is the number of $\approx$-classes in the $\sim$-class of $g$.
\end{enumerate}
\end{definition}

\begin{fact}\cite[Theorem A.3.10]{Hod93}\label{fact: 5 definable sets in G(C)}
For a nice graph $C$, $G:=G(C)$ can be partitioned into the following five $0$-definable sets:
\begin{enumerate}
\item[(i)] $Z(G)$,
\item[(ii)] $1^\nu$, the set of non-isolated elements $g \in G \setminus Z(G)$ such that $n(g)=1$,
\item[(iii)] $1^\iota$, the set of isolated elements $g \in G \setminus Z(G)$ such that $n(g)=1$,
\item[(iv)] $P$, the set of elements $g \in G$ such that $n(g)=p$,
\item[(v)] the set of elements $g \in G$ such that $n(g)=p-1$.
\end{enumerate}
\end{fact}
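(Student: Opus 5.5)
The plan is to make everything explicit in terms of the ``$X$-part'' of an element. Every $g\in G=G(C)$ can be written as $g=x_{i_1}^{\alpha_1}\cdots x_{i_n}^{\alpha_n}\cdot c$ with $c$ in the image of $Y$, and $0<\alpha_k<p$. I will call $\operatorname{supp}(g)=\{c_{i_1},\dots,c_{i_n}\}$ its support.

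First, using the multiplication formula and the definition of $V$, I will compute commutators. For $g$ with $X$-part $\sum\alpha_ix_i$ and $h$ with $X$-part $\sum\beta_ix_i$, the commutator $[g,h]$ is the image of $\sum_{i<j}(\alpha_i\beta_j-\alpha_j\beta_i)y_{i,j}$. The $y_{i,j}$ with $c_i,c_j$ \emph{non}-adjacent stay linearly independent modulo $V$. Hence $g$ and $h$ commute iff $\alpha_i\beta_j=\alpha_j\beta_i$ for every non-adjacent pair $i\neq j$. From this:
\begin{itemize}
\item $Z(G)$ is exactly the set of elements with empty support; here I use niceness (ii) to rule out a vertex adjacent to everything.
\item $g\approx h$ iff the $X$-parts are proportional.
\item $C(g)$ depends only on the $X$-part of $g$.
\end{itemize}

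Second, I will classify the non-central $g$ by the shape of their support and compute $C(g)$ in each case. There are three cases.
\begin{enumerate}
\item If $\operatorname{supp}(g)=\{c_i\}$, then $C(g)=\langle x_i,x_j : c_j \text{ adjacent to } c_i\rangle Z$. This is strictly larger than $\langle g\rangle Z$, so $g$ is non-isolated.
\item If the support is a single edge $\{c_i,c_j\}$ (adjacent vertices), then absence of $3$-cycles gives $C(g)=\langle x_i,x_j\rangle Z$.
\item If the support contains a non-adjacent pair, or has larger size, I expect the proportionality constraints to force $C(g)=\langle g\rangle Z$, so $g$ is isolated. Here I need absence of $4$-cycles and niceness (ii) to exclude extra commuting elements.
\end{enumerate}
In each case I then determine the $\sim$-class by finding all $h$ with $C(h)=C(g)$, and count the proportionality classes of their $X$-parts inside the relevant $\mathbb{F}_p$-space. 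In case (1), niceness (ii) distinguishes the centralizers of distinct vertices, giving $n(g)=1$. Case (3) yields $n(g)=1$, isolated. In case (2), the elements with centralizer $\langle x_i,x_j\rangle Z$ are the non-central elements of that group whose support is not a single vertex. Counting points of the projective line $\mathbb{P}^1(\mathbb{F}_p)$ minus the two axes gives $p-1$. I then need to check whether some further elements also have exactly this centralizer, bringing the count to $p$. This would split case (2) into the two values $p$ and $p-1$, e.g.\ according to whether certain endpoints have degree $1$ or according to the structure of $C$.

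Third, definability. The relations $\sim$, $\approx$, $\equiv_Z$ and ``isolated'' are $0$-definable by first-order formulas in the group language, since centralizers and the center are. Because $n(g)$ takes only finitely many values (at most $p$), ``$n(g)=k$'' is expressed by saying there exist $k$ pairwise $\not\approx$ elements $\sim g$ and not $k+1$. Hence all five pieces are $0$-definable. Disjointness is immediate, and the case analysis shows they cover $G$.

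The main obstacle is the second step. I must show the list of centralizer types is exhaustive and pin down precisely which edge-supported elements give $n(g)=p$ versus $p-1$. That is where conditions (ii) and (iii) of niceness are used, and I would first work through small supports carefully before attempting the general argument.
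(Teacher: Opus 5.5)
\textbf{There is a genuine gap, in case (3) of your second step, and it is exactly where $P$ lives.} Your overall plan is the standard route behind the cited result: reduce commutation to $\alpha_k\gamma_l=\alpha_l\gamma_k$ for non-adjacent pairs $\{k,l\}$, compute centralizers by support, count $\approx$-classes as projective points, then note everything is first-order. Your cases (1), (2) and the definability step are fine.

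Your expectation $C(g)=\langle g\rangle Z$ in case (3) is false. By niceness (ii), every vertex $a$ has two distinct neighbours $b,b'$, and these are non-adjacent since $C$ has no $3$-cycles. Then $g=x_bx_{b'}$ has as support a non-adjacent pair. But $x_a$ commutes with $g$ and $x_a\notin\langle g\rangle Z$, so $g$ is not isolated.

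What your constraints actually give, for a support $S$ with $|S|\ge 2$ containing a non-adjacent pair, is the following. The ratio $\gamma_k/\alpha_k$ is constant on each connected component of the complement graph on $S$. Moreover, $\gamma_l=0$ for $l\notin S$ unless $l$ is adjacent to every vertex of $S$. Absence of $4$-cycles gives $S$ at most one common neighbour. Absence of $3$- and $4$-cycles forces the complement graph on $S$ either to be connected or to split as $\{a\}\sqcup B$, with $B$ a set of at least two (pairwise non-adjacent) neighbours of $a$.

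Consequently $C(g)=\langle g\rangle Z$, so $g\in 1^\iota$, unless $S\setminus\{a\}$ is a set of at least two neighbours of a single vertex $a$ (with $a\in S$ or $a\notin S$). In that case $C(g)=\langle g,x_a\rangle Z$. The $\sim$-class of $g$ is then $\{g^\lambda x_a^\mu z:\lambda\neq 0,\ z\in Z\}$. Elements with $\lambda=0$ are either central or have centralizer $C(x_a)$, which is strictly larger because it contains $x_b$ for each $b\in S\setminus\{a\}$. The $\approx$-classes are indexed by $\mu/\lambda\in\mathbb{F}_p$, so $n(g)=p$. These are precisely the elements of $P$, and $x_a$ is their handle.

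Your fallback for producing $n(g)=p$ does not work either. Splitting the edge case according to degrees of endpoints never happens in a nice graph. Niceness (ii) applied to an adjacent pair $c_i,c_j$ gives $c_i$ a neighbour other than $c_j$, so $C(x_i)\supsetneq\langle x_i,x_j\rangle Z$, and likewise for $x_j$. Hence the $\sim$-class of an edge-supported element is exactly $\{x_i^rx_j^sz: r,s\neq 0\}$, and $n(g)=p-1$ always.

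So, as written, your argument would misclassify all the ``star-supported'' elements as isolated and would find no element with $n(g)=p$. That every non-central element has $n(g)\in\{1,p-1,p\}$ depends on the three-way split of case (3) described above.

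A minor point: definability of each piece does not need a prior bound on $n(g)$, since for each fixed $k$ the condition $n(g)=k$ is already first-order.
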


\begin{definition}
Continuing the notation of \Cref{fact: 5 definable sets in G(C)}, let $g$ be an element of $P$. For $h \in 1^\nu$, we say that $h$ is a {\it handle} of $g$ if $gh = hg$.
\end{definition}

\begin{fact}\cite[Theorem A.3.10 (c)]{Hod93}\label{fact: handle is unique}
Suppose $h$ is a handle of $g$ and $h'\in 1^\nu$. Then $h'\sim h$ if and only if $h'$ is also a handle of $g$.
\end{fact}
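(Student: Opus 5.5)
The forward direction is immediate from the definitions. Suppose $h'\sim h$, so $C(h')=C(h)$. Since $h$ is a handle of $g$, we have $gh=hg$, that is, $g\in C(h)=C(h')$. Hence $h'$ commutes with $g$, and since $h'\in 1^\nu$ by hypothesis, $h'$ is a handle of $g$.

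For the converse, assume $h'\in 1^\nu$ commutes with $g$; we must show $C(h')=C(h)$. I would work in normal forms modulo $Z(G)$. Every element of $G$ is congruent modulo $Z$ to a word $\prod_i x_i^{\alpha_i}$ with finite support $\mathrm{supp}=\{i:\alpha_i\neq 0\}$. Two such elements commute exactly when the alternating bilinear form $\sum_{i<j}(\alpha_i\beta_j-\alpha_j\beta_i)\,y_{i,j}$ vanishes modulo $V$. This means every coefficient attached to a non-edge $\{c_i,c_j\}$ must be $0$. I would then use the analysis behind \Cref{fact: 5 definable sets in G(C)} (\cite[Theorem A.3.10]{Hod93}) to pin down the relevant elements:
\begin{itemize}
\item[$\ast$] An element of $P$ is, modulo $Z$ and up to a power, of the form $x_c^{r}u$ with $r\neq 0$, where $u$ is a word supported on neighbours of the vertex $c$; the $p$ many $\approx$-classes in its $\sim$-class come from the choices of $r$.
\item[$\ast$] The non-isolated elements with $n=1$ that commute with such a $g$ are exactly, modulo $Z$, the nonzero words supported on $N(c)$, the set of neighbours of $c$.
\end{itemize}

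The key step is to show that every non-isolated $w\in 1^\nu$ whose support lies in $N(c)$, with $|\mathrm{supp}(w)|\ge 2$, has centralizer $C(w)=\langle x_c, x_d : d\in N(c)\rangle Z$. The inclusion $\supseteq$ holds because:
\begin{itemize}
\item[$\ast$] $x_c$ commutes with each $x_d$ for $d\in N(c)$, since these are edges;
\item[$\ast$] for distinct $d,d'\in N(c)$, there is no edge $d$--$d'$ (it would form a $3$-cycle with $c$), so the possible non-commutation between two such words has to be examined. In fact I need to restrict which words on $N(c)$ count, and I would use the genuine description of $1^\nu$ from \cite{Hod93} here rather than guess.
\end{itemize}
For the inclusion $\subseteq$, suppose $v$ commutes with $w$ and $e\in\mathrm{supp}(v)$ with $e\notin N(c)\cup\{c\}$. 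Then $e$ would have to be adjacent to at least two distinct vertices in $\mathrm{supp}(w)\subseteq N(c)$. Together with $c$, this produces a $4$-cycle, which niceness forbids. So the support of $v$ is controlled by $c$ and $N(c)$, and the bilinear-form computation finishes the inclusion.

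Applying the key step to both $h$ and $h'$ gives $C(h)=C(h')$, that is, $h\sim h'$. The main obstacle is this combinatorial centralizer computation: handling the linear-algebraic cancellation in the commutator form for words with several support points, and using the absence of $3$- and $4$-cycles and the separation condition (ii) in the definition of a nice graph to rule out exotic commuting elements. I would first treat words modulo $Z$ with support of size at most $2$, then induct on support size.
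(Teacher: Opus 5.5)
Your forward direction is correct. The converse has a genuine gap. The step you defer to \cite{Hod93} is the whole content of the fact, and the descriptions you guess for $1^\nu$, $P$ and handles are wrong.

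In $G(C)$, $1^\nu$ consists exactly of the elements $x_c^r z$ with $r\neq 0$ and $z\in Z$, i.e.\ those whose image in $G/Z$ has one-point support. Niceness (ii) gives $|N(c)|\ge 2$, so $C(x_c)=\langle x_c,x_d : d\in N(c)\rangle Z\neq\langle x_c\rangle Z$. Conversely, anything with the same centralizer as $x_c$ has support $\{c\}$.

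A word $w$ with $\mathrm{supp}(w)=D\subseteq N(c)$ and $|D|\ge 2$ is therefore not in $1^\nu$. Your own $4$-cycle argument puts the support of any $v\in C(w)$ inside $D\cup\{c\}$. Since distinct vertices of $D$ are non-adjacent, the commutator condition forces the $D$-part of $v$ to be a scalar multiple of $w$. Hence $C(w)=\langle x_c,w\rangle Z$ and $w\in P$. So your key step concerns an empty class, and the centralizer it asserts is false: $x_d$ with $d\in D$ does not commute with $w$.

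Your second bullet fails for the same reason. Let $g\equiv_Z x_c^a u^b$ with $b\neq 0$ and $\mathrm{supp}(u)=D$ as above. Then $x_e^r z\in 1^\nu$ commutes with $g$ only when $e=c$. If $e\in N(c)$, it is non-adjacent to a vertex of $D\setminus\{e\}$. If $e\notin N(c)\cup\{c\}$, it cannot be adjacent to two vertices of $D$ without a $4$-cycle through $c$. So the handles of $g$ form exactly the $\sim$-class of $x_c$, which contains no word on $N(c)$. In your first bullet the exponents are also switched: $a$ ranges over all of $\mathbb{F}_p$ (so $u$ itself lies in $P$), and it is $b$ that must be nonzero. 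The $p$ many $\approx$-classes are those of $x_c^a u$, $a\in\mathbb{F}_p$.

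What is missing is the classification behind \Cref{fact: 5 definable sets in G(C)}, which the paper simply imports from \cite{Hod93}. Take $g\notin Z$ with support $S$, $|S|\ge 2$, and some $v\in C(g)\setminus\langle g\rangle Z$.
\begin{itemize}
\item[$\ast$] The commutation condition makes the coefficient vector of $v$ a fixed multiple of that of $g$ on each connected component of the non-adjacency graph restricted to $S$.
\item[$\ast$] The absence of $3$- and $4$-cycles then forces one of two shapes: either $S$ is independent with a unique common neighbour $c$, or $S=\{c\}\cup D$ with $D\subseteq N(c)$.
\item[$\ast$] This gives $n(g)=p$ when $S$ is independent or $|D|\ge 2$, and $n(g)=p-1$ when $|D|=1$. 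In particular no element with support of size $\ge 2$ lies in $1^\nu$, and every element of $P$ has the form $x_c^a u^b z$ described above.
\end{itemize}
With this in hand the converse is one line: every handle of $g$ is $\sim x_c$, hence $h\sim x_c\sim h'$.

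Finally, the paper applies the fact in a saturated $\mathbb{G}\models\Th(G(C))$, where your normal forms are not available. You should add that the statement is first-order, since $\sim$, $1^\nu$ and $P$ are $0$-definable, and so it transfers from $G(C)$.
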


\begin{fact}\cite[the paragraph between Fact 2.5 and Definition 2.6]{CH18}
Let $C$ be a nice graph and $G:=G(C)$. Both $Z(G)$ and the quotient group $G/Z(G)$ are isomorphic to $\mathbb{F}_p$-vector spaces. Thus they are stable in the group language $\{+,-,0\}$.
\end{fact}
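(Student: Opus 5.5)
The plan is to prove the two claims separately: first identify the algebraic structure of $Z(G)$ and of $G/Z(G)$, then deduce stability from the well-known model theory of vector spaces over a finite field.

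\textbf{Algebraic part.} Since $G=G(C)$ is $2$-nilpotent and of exponent $p$, as noted after the definition of $G_p(C)$, we have $[G,G]\subseteq Z(G)$.

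For $Z(G)$: it is an abelian subgroup in which every element satisfies $g^p=1$. Hence $Z(G)$ carries a natural $\mathbb{F}_p$-module structure, with scalar multiplication $k\cdot g:=g^k$ for $k\in\mathbb{Z}/p\mathbb{Z}$. This is well defined because $g^p=1$, and the vector space axioms reduce to the usual exponent laws in an abelian group.

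For $G/Z(G)$: since $[G,G]\subseteq Z(G)$, the quotient is abelian. Every coset satisfies $(gZ)^p=g^pZ=Z$, so the quotient also has exponent $p$ and becomes an $\mathbb{F}_p$-vector space in the same way.

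If one wants a direct check that $G$ is $2$-nilpotent rather than quoting it, I would use the explicit product on $F_p(C)$. There every commutator of two elements $(\sum\alpha_ix_i,c)$ and $(\sum\beta_ix_i,d)$ has trivial $X$-component, so it lies in the image of $Y$. A second direct computation shows that elements of the image of $Y$ commute with everything. Both properties pass to the quotient by $V$.

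\textbf{Stability.} The group language $\{+,-,0\}$, with no scalar symbols, already defines scalar multiplication by each $k\in\mathbb{F}_p$, namely as $k$-fold addition. So an $\mathbb{F}_p$-vector space in this language is essentially the usual module structure. I then split into two cases:
\begin{itemize}
\item[$\ast$] If the space is finite, its theory is trivially stable, since it has only finitely many types over any set.
\item[$\ast$] If the space is infinite, I invoke quantifier elimination for infinite $\mathbb{F}_p$-vector spaces in $\{+,-,0\}$. The axiomatization consists of the abelian group axioms, the axiom $px=0$, and infinitude; it is $\kappa$-categorical for every infinite $\kappa$, hence complete. With quantifier elimination, every definable subset of the line is finite or cofinite, so the theory is strongly minimal and therefore stable, indeed $\omega$-stable. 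As an alternative route, every module is stable by the Baur--Monk pp-elimination.
\end{itemize}

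\textbf{Main obstacle.} There is no real obstacle in this argument. The only point needing care is making sure that $2$-nilpotency (equivalently $[G,G]\le Z(G)$) and exponent $p$ genuinely hold for $G(C)$. These are what make both groups abelian of exponent $p$, and they follow from the commutator computation in $F_p(C)$ sketched above.
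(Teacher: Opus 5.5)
Your proof is correct and matches the standard reasoning behind this cited fact. The paper gives no proof of its own; like you, it relies on $G(C)$ being $2$-nilpotent of exponent $p$, so that $Z(G)$ and $G/Z(G)$ are elementary abelian $p$-groups, whose theories in $\{+,-,0\}$ are stable.

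One small correction to your final paragraph: exponent $p$ does not come from the commutator computation. It comes from the power formula $(a,c)^n=\bigl(na,\,nc-\binom{n}{2}\sum_{i<j}\alpha_i\alpha_j y_{i,j}\bigr)$ in $F_p(C)$, for $a=\sum_i\alpha_i x_i$, which vanishes at $n=p$ because $p$ is odd. Since you quote exponent $p$ from the paper anyway, this does not affect your argument.
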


\begin{definition}
Let $B$ be a subgroup of $G$ containing $Z(G)$. A set $C \subseteq G$ is said to be {\it independent modulo $B$} if it is linearly independent over $B$ in the corresponding vector space.
\end{definition}

For any model $\mathbb{G}$ of $\Th(G(C))$, we denote by
$1_\mathbb{G}^\nu$, $1_\mathbb{G}^\iota$, and $P_\mathbb{G}$
the subsets of $\mathbb{G}$ defined by the formulas described in
\Cref{fact: 5 definable sets in G(C)} (ii), (iii), and (iv), respectively. When there is no risk of confusion, we simply write
$1^\nu$, $1^\iota$, and $P$.

\begin{definition}
Let $\mathbb{G}$ be a model of $\Th(G(C))$.
\begin{enumerate}
\item[(i)] A {\it $1^\nu$-transversal} of $\mathbb{G}$, denoted by $X^\nu$, is a set containing exactly one representative from each $\sim$-class of $1^\nu$.
\item[(ii)] An element of $\mathbb{G}$ is called {\it proper} if it is not a product of elements in $1^\nu$.
\item[(iii)] A {\it $p$-transversal} of $\mathbb{G}$, denoted by $X^p$, is a set consisting of proper elements in $P$ such that
\begin{enumerate}
\item[$\ast$] for any $g,h \in X^p$, $g \not\sim h$;
\item[$\ast$] $X^p$ is maximal with the property that for any finite subset $X' \subseteq X^p$, if all elements of $X'$ have the same handle, then $X'$ is independent modulo the subgroup $\langle Z(\mathbb{G}) \cup 1^\nu \rangle$.
\end{enumerate}
\item[(iv)] A {\it $1^\iota$-transversal} of $\mathbb{G}$, denoted by $X^\iota$, is a set containing exactly one representative in each $\sim$-class of $1^\iota$ and is maximal independent modulo the subgroup $\la Z(\mathbb{G})\cup 1^\nu \cup P\ra$.
\item[(v)] A transversal of $\mathbb{G}$ is a union of some $X^\nu$, $X^p$, and $X^\iota$ of $\mathbb{G}$.
\end{enumerate}
\end{definition}

\begin{fact}\label{fact: a nice graph is interpretable in its Mekler group}\cite[Theorem A.3.14 (a)]{Hod93}
Let $C$ be a nice graph and $G:=G(C)$ the Mekler group of $C$. Let $\Gamma:=\Gamma(G)$ be an graph interpretation in $G$ such that 
\begin{itemize}
\item[(i)] the universe is $\sim$-equivalent classes of the set of $1^\nu$,
\item[(ii)] the edge relation is the set of all pairs $([g]_\sim,[h]_\sim)$ such that $[g]_\sim \!\neq [h]_\sim$ and $gh=hg$.
\end{itemize}
Then $\Gamma\models\Th(C)$. 
\end{fact}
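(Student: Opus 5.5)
The plan is to prove the statement first for $G:=G(C)$ itself, by showing $\Gamma(G)\cong C$, and then pass to $\Th(C)$. Throughout I will write elements of $G$ modulo $Z(G)$ in normal form. Every $g\in G$ can be written as $g=\prod_i x_i^{\alpha_i}\cdot z$ with $z\in Z(G)$; here I will check that $Z(G)$ is generated by the images of the $y_{i,j}$ (for non-adjacent $c_i,c_j$), using condition (ii) of niceness. Set $\mathrm{supp}(g):=\{i:\alpha_i\ne 0\}$.

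The basic computation is the following. For $g=\prod x_i^{\alpha_i}z$ and $h=\prod x_j^{\beta_j}z'$, we have $[g,h]=\prod_{i<j}y_{i,j}^{\alpha_i\beta_j-\alpha_j\beta_i}$ modulo $V$. Hence $gh=hg$ if and only if $\alpha_i\beta_j-\alpha_j\beta_i\equiv 0 \pmod p$ for every pair $i<j$ with $c_i,c_j$ non-adjacent. From this, the centralizer of a generator $x_i$ is
\[
C(x_i)=\langle Z(G),\,x_i,\,x_j : C\models R(c_i,c_j)\rangle .
\]
Consequently $x_i^r z\sim x_i$ for $0<r<p$. Moreover, $x_i\not\sim x_j$ for $i\ne j$: by niceness (ii) there is a $c$ adjacent to $c_i$ and not to $c_j$, so the corresponding $x_c$ lies in $C(x_i)\setminus C(x_j)$.

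Next I would check that each $x_i$ is non-isolated with $n(x_i)=1$. It is non-isolated because $C(x_i)\setminus Z$ contains $x_j$ for a neighbour $c_j$, and $x_j\not\approx x_i$; such a neighbour exists by niceness (ii). For $n(x_i)=1$, I show that any $g\sim x_i$ satisfies $g\approx x_i$. If $|\mathrm{supp}(g)|\ge 2$, then $C(g)$ is strictly smaller or different: using the absence of $3$- and $4$-cycles, two distinct vertices have at most one common neighbour, and this lets one exhibit an element of $C(x_i)$ not commuting with $g$.

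The main obstacle is the converse: every element of $1^\nu$ is $\approx$-equivalent to some $x_i$. That is, elements with support of size at least $2$ fall into $1^\iota$, $P$, or the $n=p-1$ class. I would run a case analysis on the induced subgraph on $\mathrm{supp}(g)$, again exploiting that $C$ has no triangles or squares. The cases are: if the support induces a single edge $\{i,j\}$, the elements $x_i^a x_j^b$ give $p$ or $p-1$ distinct $\approx$-classes with the same centralizer; otherwise, $C(g)$ is just $\langle g,Z\rangle$, so $g$ is isolated. This is exactly the content of the partition in \Cref{fact: 5 definable sets in G(C)}. If that fact is taken as given in Hodges' form, where $1^\nu$ is described as the $\sim$-classes of the $x_i$, this step can be quoted rather than redone.

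Granting this, the map $c_i\mapsto [x_i]_\sim$ is a bijection from $C$ onto the universe of $\Gamma(G)$. It preserves edges because $x_ix_j=x_jx_i$ if and only if $y_{i,j}\in V$, if and only if $C\models R(c_i,c_j)$. Commutation of representatives is independent of the choice of representative, since $g\sim x_i$ means $g\approx x_i$. Thus $\Gamma(G)\cong C$, so $\Gamma\models \Th(C)$. Finally, since $\Gamma$ is given by a fixed interpretation (with $\sim$, $1^\nu$, and commutation all $0$-definable), the same holds for $\Gamma(\mathbb{G})$ in every $\mathbb{G}\models\Th(G(C))$, because $\Gamma(\mathbb{G})\equiv\Gamma(G)$.
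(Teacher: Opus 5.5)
Your strategy is the standard argument behind \cite[Theorem A.3.14]{Hod93}, which the paper cites without proof: identify $1^\nu/\!\sim$ with $\{[x_i]_\sim\}$, check that commutation of generators is adjacency so that $\Gamma(G(C))\cong C$, then transfer along the $0$-definable interpretation. Your computations of $Z(G)$, $C(x_i)$, $x_i\not\sim x_j$ and $x_i\in 1^\nu$ are sound.

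The step that fails is the dichotomy in your ``main obstacle'' paragraph. It is not true that every $g$ whose support $S$ has at least two elements and is not a single edge satisfies $C(g)=\langle g\rangle Z(G)$. In a nice graph every vertex $c_l$ has two distinct neighbours $c_k,c_{k'}$ (apply niceness (ii) twice). They are non-adjacent, since there are no triangles. Take $g=x_kx_{k'}$. For $n\notin\{k,k'\}$, your commutation criterion forces $\beta_n=0$ unless $c_n$ is adjacent to both $c_k$ and $c_{k'}$, and $c_l$ is the only such vertex. So $C(g)=\langle g,x_l\rangle Z(G)$, and $g$ is non-isolated. In fact its $\sim$-class is $\{g^rx_l^sz: r\neq 0,\ z\in Z(G)\}$, so $n(g)=p$ and $x_l$ is its handle. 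This is exactly where the set $P$ comes from, and your case split has no room for it: a single edge gives exactly $p-1$ classes, never $p$.

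The correct picture for $|S|\ge 2$ is a trichotomy:
\begin{itemize}
\item[(a)] $S$ is an edge, and $n(g)=p-1$;
\item[(b)] $S\subseteq\{l\}\cup N(l)$ with $|S\cap N(l)|\ge 2$ for some $l$, and $n(g)=p$ with handle $x_l$;
\item[(c)] otherwise $C(g)=\langle g\rangle Z(G)$, so $g\in 1^\iota$.
\end{itemize}
To prove it, note two consequences of your criterion. First, $\beta_m/\alpha_m$ is constant on each connected component of the graph on $S$ whose edges are the non-adjacent pairs of $C$. Second, for $n\notin S$, $\beta_n$ must vanish unless $c_n$ is adjacent to every vertex of $S$, in which case it is free.

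The absence of $4$-cycles does two things here. It allows at most one such outside vertex. It also forces that non-adjacency graph to be connected unless $S$ is a star: if $S=S_1\sqcup S_2$ with all edges present between the parts, one part must be a singleton. This is where the bound on common neighbours is really needed; your $n(x_i)=1$ step only uses the absence of triangles and niceness (ii).

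In cases (a) and (b) one exhibits $h\sim g$ with $h\not\approx g$ directly: $x_i^ax_j^{2b}$ for $g=x_i^ax_j^b$ (using $p$ odd), and $gx_l$ in the star case. So your conclusion that no element with support of size $\ge 2$ lies in $1^\nu$ does survive, but the argument as written does not establish it.

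Falling back on \Cref{fact: 5 definable sets in G(C)} does not help either. That fact only records the five-set partition and says nothing about $1^\nu$ being the union of the classes $[x_i]_\sim$. You would need to quote Hodges' finer analysis of the $\sim$-classes, or carry out the trichotomy above.
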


\begin{fact}\cite[Theorem A.3.14, Corollary A.3.15]{Hod93}\cite[Fact 2.11, Remark 2.12]{CH18}\label{fact: G=<X>H saturated}
Let $C$ be a nice graph and $\mathbb{G}$ a model of theory of $G(C)$. Then $\mathbb{G}$ is isomorphic to $\la X\ra \times H$ where $X$ is a transversal of $\mathbb{G}$ and $H$ is a subgroup of the center of $\mathbb{G}$. Moreover, $H$ is isomorphic to a $\mathbb{F}_p$-vector space. If $\mathbb{G}$ is uncountable and saturated additionally, then the following hold.
\begin{enumerate}
\item[(i)] Both $X^\nu$ and $H$ are saturated, as a graph structure and a group structure, respectively.
\item[(ii)] For any $g \in X^\nu$, the set $\{g' \in X^p : g \text{ is a handle of } g'\}$ is uncountable.
\item[(iii)] $|X^\iota|$ is uncountable.
\end{enumerate}
\end{fact}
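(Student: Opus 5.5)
The plan follows Hodges' structure theory for Mekler groups, Theorem A.3.14 in \cite{Hod93}, and splits into an algebraic decomposition and a saturation argument. Throughout I use that $\mathbb{G}/Z(\mathbb{G})$ and $Z(\mathbb{G})$ are $\mathbb{F}_p$-vector spaces.

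\emph{Algebraic decomposition.} First I will show that $\mathbb{G}=\langle X\rangle Z(\mathbb{G})$.
\begin{enumerate}
\item[(a)] Every non-central element lies in exactly one of $1^\nu$, $1^\iota$, $P$, or the $(p-1)$-part, by \Cref{fact: 5 definable sets in G(C)}.
\item[(b)] Working modulo $Z(\mathbb{G})$, I proceed in stages: modulo $\langle Z\cup 1^\nu\rangle$, modulo $\langle Z\cup 1^\nu\cup P\rangle$, and so on.
\item[(c)] At each stage I use that the chosen transversal is maximal independent, as in its definition, to write each element as a product of transversal elements times a central element.
\item[(d)] For elements with $n(g)=p-1$, I expect to express them, following Hodges, as products of elements of $1^\nu$ and $P$ up to the center.
\end{enumerate}
This gives $\mathbb{G}=\langle X\rangle Z(\mathbb{G})$. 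I then choose a vector-space complement $H$ of $Z(\mathbb{G})\cap\langle X\rangle$ inside $Z(\mathbb{G})$. Since $H$ is central and $H\cap\langle X\rangle=1$, we get $\mathbb{G}\cong\langle X\rangle\times H$, and $H$ is an $\mathbb{F}_p$-vector space.

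\textbf{Main obstacle.} The delicate point is that $\langle X\rangle$ is presented only by the "expected" relations: commutators $[g,h]$ for $g,h\in X$ are trivial exactly when the graph dictates, and are otherwise linearly independent in $Z$. I expect this to need the type-by-type computation of centralizers in Mekler groups (niceness of $C$, no $3$- or $4$-cycles). I will take it from \cite{Hod93} rather than reprove it. The argument only needs the decomposition to exist, so I will not dwell on its uniqueness.

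\emph{Saturated case.} Assume $\mathbb{G}$ is uncountable and saturated.
\begin{enumerate}
\item[(i)] For $X^\nu$: the map $g\mapsto[g]_\sim$ is a bijection from $X^\nu$ onto $\Gamma(\mathbb{G})$, and it preserves edges by \Cref{fact: a nice graph is interpretable in its Mekler group}. The structure $\Gamma(\mathbb{G})$ is interpreted without parameters in a saturated structure, hence is saturated of the same cardinality. So $X^\nu$ is a saturated graph. For $H$: an infinite $\mathbb{F}_p$-vector space is saturated iff its dimension equals its cardinality. I will show $\dim H=|\mathbb{G}|$ by a compactness argument. The type saying that $z$ is central and lies outside the span of a given small set of central elements is consistent. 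Saturation then yields $|\mathbb{G}|$-many independent central elements beyond any small set. The part $Z\cap\langle X\rangle$ is spanned by commutators of $X$, so it does not exhaust this dimension; this yields $\dim H=|\mathbb{G}|$.
\item[(ii)] Fix a handle $g\in X^\nu$. Consider the partial type in $y$ over any countable set $A$ of elements of $P$ having handle $g$. It says that $y\in P$, $y$ has handle $g$, $y$ is proper, and $y$ is independent from $A$ modulo $\langle Z\cup 1^\nu\rangle$. Independence is expressible over countably many parameters by infinitely many formulas. This type is finitely satisfiable, as one sees in the standard model $G(C)$ together with elementarity. By saturation it is realized, so a maximal family is uncountable. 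By \Cref{fact: handle is unique}, the family can be chosen with pairwise non-$\sim$ elements.
\item[(iii)] The same type-realization argument applies to $1^\iota$ modulo $\langle Z\cup1^\nu\cup P\rangle$, so $|X^\iota|$ is uncountable.
\end{enumerate}
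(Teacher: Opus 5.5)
Your algebraic decomposition is fine in outline, and it matches what the cited Theorem A.3.14 of Hodges does. Indeed $1^\nu\subseteq\langle X^\nu\rangle Z$ because $n=1$, and $P\subseteq\langle X^\nu\cup X^p\rangle Z$ by maximality of $X^p$ together with the transferred description $\{g^rd^sz\}$ of the $\sim$-class of a $P$-element with handle $d$. The case of $1^\iota$ is similar, and elements with $n(g)=p-1$ are products of two elements of $1^\nu$. After that, any complement $H$ of $Z\cap\langle X\rangle$ in $Z$ works.

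\textbf{Gap in (ii).} The genuine gap is the sentence ``this type is finitely satisfiable, as one sees in the standard model $G(C)$''. The standard model has no proper elements at all, because $G(C)$ is generated by the vertices and these lie in $1^\nu$. So properness must be checked formula by formula: for each $n$ you need some $y\in P$ with handle $g$ that is not a product of at most $n$ elements of $1^\nu$ (and that avoids finitely many cosets).
\begin{itemize}
\item[$\ast$] If $g$ has infinitely many neighbours in $\Gamma(\mathbb{G})$, this works. A product of $n+1$ pairwise non-$\sim$ neighbours of $g$ is such a $y$, and a pigeonhole argument handles the parameters.
\item[$\ast$] If $g$ has exactly $m$ neighbour classes, it fails. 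In $G(C)$ the centralizer of such an element is $\langle\{d\}\cup N(d)\rangle Z$ for a vertex $d$ with $|N(d)|=m$, so every element commuting with it is a product of at most $m+2$ elements of $1^\nu$. This is a first-order sentence, so it holds in $\mathbb{G}$, and no proper element of $P$ has handle $g$.
\end{itemize}
Such vertices really occur. Niceness just means girth at least $5$ and minimum degree at least $2$, so the two-way infinite path on $\mathbb{Z}$ is nice, and there $X^p=\emptyset$ in every model. Read literally for arbitrary nice $C$, (ii) is therefore false, and no argument can close this step without an extra hypothesis.

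\textbf{Gap in (iii).} The same problem breaks ``the same argument''. You need elements of $1^\iota$ that are not products of boundedly many elements of $1^\nu\cup P$, and whether they exist depends on $C$. Take the nice graph with vertices $d_1,d_2,x_i,y_i$ ($i<\omega$) and edges $d_1x_i$, $x_iy_i$, $y_id_2$. Every vertex lies in the closed neighbourhood of $d_1$ or of $d_2$. An element whose image in $G(C)/Z$ is supported in one closed neighbourhood is a product of at most two elements of $1^\nu\cup P$. Hence every element of $G(C)$ is a product of at most four such elements; this transfers to every model, so $X^\iota=\emptyset$ there.

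\textbf{What the argument gives.} Your type-realization argument yields only a dichotomy. The set in (ii) is empty or of size $|\mathbb{G}|$, according as $g$ has finitely or infinitely many neighbours. Likewise $|X^\iota|\in\{0,|\mathbb{G}|\}$, according as elements of $1^\iota$ in $G(C)$ have bounded or unbounded length as products from $1^\nu\cup P$.

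\textbf{The $H$ part of (i).} The sentence ``$Z\cap\langle X\rangle$ is spanned by commutators of $X$, so it does not exhaust this dimension'' is a non sequitur, since there are $|\mathbb{G}|$ such commutators. Two ingredients fix it.
\begin{itemize}
\item[$\ast$] Central elements of $G(C)$ have unbounded commutator width. To see this, project $Z(G(C))\cong\Lambda^2V/E$ onto $\Lambda^2$ of the span of an infinite anticlique; here $V=\mathbb{F}_p^{(C)}$ and $E$ is spanned by the edges, and the anticlique exists because $C$ is triangle-free.
\item[$\ast$] Suppose $\dim H<|\mathbb{G}|$, and let $\mathrm{Comm}_n$ be the set of products of at most $n$ commutators. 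Realize, over a basis $A$ of $H$, the type saying $z\in Z$ and $z\notin a\cdot\mathrm{Comm}_n$ for all $a\in\langle A\rangle$ and $n<\omega$. Since $Z\cap\langle X\rangle=\langle X\rangle'$, this $z$ lies outside $H\cdot(Z\cap\langle X\rangle)=Z$, a contradiction.
\end{itemize}
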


We may regard $\mathbb{G}$ as isomorphic to $\langle X\rangle \times \langle H’ \rangle$, where $H’$ is a basis of $H$ as a vector space. Also note that $\Th(H)$ is stable and has quantifier elimination.

\begin{fact}\cite[Proposition 2.18]{CH18}\label{fact: X H are type definable}
Let $C$ be an infinite nice graph and $\mathbb{G}\models\Th(G(C))$.
There exists a partial type $\pi(\bar x,\bar y)$ with small tuples $\bar x,\bar y$
such that $\bar a\bar b \models \pi$ if and only if there exists a transversal $X$
of $G$ containing $\bar a$ and an independent subset $H$ of $Z(\mathbb{G})$ containing
$\bar b$ such that $\mathbb{G} = \langle X \rangle \times \langle H \rangle$. 
\end{fact}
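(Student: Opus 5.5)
The plan is to write $\pi(\bar x,\bar y)$ as the union of the natural first-order "local" conditions that a transversal and a central basis must satisfy. I would then use Zorn's lemma together with the structure theory behind \Cref{fact: G=<X>H saturated} to show that any realization extends to a full decomposition. The converse direction, that tuples inside a transversal and an independent $H$ satisfy $\pi$, will be immediate once each condition in $\pi$ is visibly necessary.

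\textbf{Step 1: the formulas in $\pi$.} Fix a partition of the variables $\bar x=\bar x^\nu\bar x^p\bar x^\iota$. By \Cref{fact: 5 definable sets in G(C)} and \Cref{fact: handle is unique}, $\pi$ will contain the following conditions.
\begin{itemize}
\item[(a)] $x\in 1^\nu$, $x\in P$, $x\in 1^\iota$ for the respective variables, and $x\not\sim x'$ for distinct variables in the same block.
\item[(b)] Properness of each $x\in\bar x^p$: for every $n$, the formula ``$x$ is not a product of $n$ elements of $1^\nu$''.
\item[(c)] Independence of each finite subset of $\bar x^p$ with a common handle modulo $\la Z\cup 1^\nu\ra$. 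For every such finite subset (having a common handle is definable), every nonzero exponent vector $\bar e$ over $\mathbb{F}_p$, and every $n$, $\pi$ contains $\neg\exists \bar u\,\exists z\,(\prod x_i^{e_i}=u_1\cdots u_n z\wedge \bigwedge u_k\in 1^\nu\wedge z\in Z)$.
\item[(d)] The analogous independence of $\bar x^\iota$ modulo $\la Z\cup 1^\nu\cup P\ra$.
\item[(e)] $\bar y\subseteq Z$, and $\bar y$ is linearly independent modulo the commutator subgroup $\mathbb{G}'$. For each $n$ and each nonzero exponent vector this says that $\prod y_j^{e_j}$ is not a product of $n$ commutators.
\end{itemize}
All of these are first-order, since "generated subgroup" is replaced by "product of at most $n$ generators" for each $n$, and these are negated conjunctively. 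Hence $\pi$ is a partial type with $|\pi|$ small.

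\textbf{Step 2: extension.} Suppose $\bar a\bar b\models\pi$.
\begin{itemize}
\item[(i)] Extend $\bar a^\nu$ by choosing one representative in each remaining $\sim$-class of $1^\nu$, giving $X^\nu$.
\item[(ii)] Extend $\bar a^p$ by Zorn's lemma to a maximal set of proper, pairwise non-$\sim$ elements of $P$ satisfying the handle-wise independence condition. The maximal element is by definition a $p$-transversal.
\item[(iii)] Extend $\bar a^\iota$ similarly, first to a set independent modulo $\la Z\cup 1^\nu\cup P\ra$ and then to a $1^\iota$-transversal.
\end{itemize}
This yields a transversal $X\supseteq\bar a$. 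By \Cref{fact: G=<X>H saturated} (following Hodges, Theorem A.3.14), $\mathbb{G}=\la X\ra\cdot Z(\mathbb{G})$, and $\la X\ra\cap Z(\mathbb{G})$ is generated by commutators of elements of $X$. Since $\mathbb{G}$ is $2$-nilpotent of exponent $p$, $x^p=1$, so this intersection is exactly $\mathbb{G}'$.

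\textbf{Step 3: the central factor, and the main obstacle.} $Z(\mathbb{G})$ is an $\mathbb{F}_p$-vector space containing the subspace $\mathbb{G}'=\la X\ra\cap Z$. By (e), $\bar b$ is independent modulo $\mathbb{G}'$, so $\bar b$ extends to a basis $H$ of a complement of $\mathbb{G}'$ in $Z$. Then $\mathbb{G}=\la X\ra\times\la H\ra$.

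The main obstacle is the identity $\la X\ra\cap Z=\mathbb{G}'$ for an \emph{arbitrary} transversal $X$, together with $\la X\ra Z=\mathbb{G}$. This must be extracted from the proof of the decomposition rather than just its statement. I would verify it using the normal-form analysis in Hodges Appendix A.3: elements of $\la X\ra$ are written as $\prod x^{e_x}$ times commutators, and one checks that such an element is central only when all $e_x=0$, using the independence and non-$\sim$ conditions. A secondary check is that the Zorn extensions in Step 2 really are transversals in the sense of the definition, in particular the maximality clause for $X^p$; this should follow directly from the definition.
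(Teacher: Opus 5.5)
Your choice of $\pi$ is right. Each of (a)--(e) is necessary: for (e), note that $\mathbb{G}'=\la X\ra'\subseteq\la X\ra$, so it meets $\la H\ra$ trivially. Each condition is type-definable, and Zorn's lemma does extend $\bar a$ to a transversal.

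The gap is the step you postpone, and it carries the whole $\bar y$-part. You need that an \emph{arbitrary} transversal $X$ is linearly independent modulo $Z(\mathbb{G})$. Given $\mathbb{G}=\la X\ra Z(\mathbb{G})$, which does follow from the statement of \Cref{fact: G=<X>H saturated}, this is equivalent to $\la X\ra\cap Z(\mathbb{G})=\mathbb{G}'$. Without it, (e) only gives $\la\bar b\ra\cap\mathbb{G}'=1$, not $\la\bar b\ra\cap\la X\ra=1$, so Step 3 does not produce $\mathbb{G}=\la X\ra\times\la H\ra$.

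Your proposed check, ``using the independence and non-$\sim$ conditions'', does not close this.
\begin{itemize}
\item Independence of $X^\iota$ modulo $\la Z\cup 1^\nu\cup P\ra$ does kill the $X^\iota$-exponents. You are then left with $\prod_{x\in X^p}x^{e_x}\in\la Z\cup 1^\nu\ra$.
\item A $p$-transversal is only required to be independent among elements \emph{sharing a common handle}. Nothing in the definition stops elements with different handles from combining into $\la Z\cup1^\nu\ra$.
\item Likewise, pairwise non-$\sim$ alone does not show that a product of several elements of $X^\nu$ with nonzero exponents is non-central.
\end{itemize}

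The missing input is graph-theoretic and comes by transfer from $G(C)$. The relevant facts there are:
\begin{itemize}
\item $1^\nu$ consists of the elements $x_i^a c$ with $a\neq 0$ and $c$ central.
\item An element commutes with $x_k^a c$ iff its $x$-support lies in $\{k\}\cup N(k)$.
\item In a nice graph, every vertex has a non-neighbour, and two distinct vertices have at most one common neighbour.
\end{itemize}
Support counting then gives, for each $m,n<\omega$, first-order statements true in $G(C)$ and hence in $\mathbb{G}$:
\begin{itemize}
\item[(1)] A product of $n$ pairwise non-$\sim$ elements of $1^\nu$ with exponents nonzero mod $p$ is never central.
\item[(2)] Suppose $h_1,\dots,h_m\in 1^\nu$ are pairwise non-$\sim$, each $g_j$ commutes with $h_j$, and $g_1\cdots g_m$ is a product of $n$ elements of $1^\nu$. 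Then each $g_j$ is a product of at most $M(m,n)$ elements of $1^\nu$. For instance, $M=n+3m+2$ works, because supports attached to distinct handles overlap in at most $3$ points.
\end{itemize}
Also $Z\subseteq 1^\nu\cdot 1^\nu$, so $\la Z\cup 1^\nu\ra=\la 1^\nu\ra$. Now group $\prod_{X^p}x^{e_x}$ by handles and apply (2), then same-handle independence; this kills the $X^p$-exponents. Statement (1) then kills the $X^\nu$-exponents. With this lemma in place, your Step 3 goes through as written.
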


\begin{fact}\cite[Lemma 2.14]{CH18}\label{fact: bijection->automorphism}
Let $C$ be an infinite nice graph and let $\mathbb{G}$, $X$, and $H$ be as in \Cref{fact: X H are type definable}.
Let $f : Y \to Z$ be a bijection between two small subsets $Y,Z \subseteq X$
such that:
\begin{enumerate}
\item[(i)] $f$ preserves transversal-types of $Y$ to $Z$;
\item[(ii)] $f$ preserves the handle relation, i.e. if $g \in X^\nu$ is a handle of $g' \in X^p$, then $f(g)$ is a handle of $f(g')$;
\item[(iii)] $Y^\nu$ and $Z^\nu$ have the same first-order type with respect to the graph structure on $X^\nu$, which is given by \Cref{fact: a nice graph is interpretable in its Mekler group}.
\end{enumerate}
Then $f$ extends to an automorphism of $\mathbb{G}$.
Moreover, for any $h,k \in H$, if $\tp(h)$ and $\tp(k)$ are equivalent modulo
$\Th(H)$, we may assume that the automorphism sends $h$ to $k$.

In particular, if two finite tuples $\bar{a}:=\bar{a}^\nu{}^\frown\bar{a}^p{}^\frown\bar{a}^\iota{}^\frown\bar{a}^h$, $\bar{b}=\bar{b}^\nu{}^\frown\bar{b}^p{}^\frown\bar{b}^\iota{}^\frown\bar{b}^h$ with $\bar{a}^\nu,\bar{b}^\nu\in X^\nu$, $\bar{a}^p,\bar{b}^p\in X^p$, $\bar{a}^\iota,\bar{b}^\iota\in X^\iota$, $\bar{a}^h,\bar{b}^h \in H$, $\bar{a}^\nu:=(a^\nu_0,...,a^\nu_{n-1})$, $\bar{a}^p:=(a^p_0,...,a^p_{m-1})$, $\bar{b}^\nu:=(b^\nu_0,...,b^\nu_{n-1})$, $\bar{b}^p:=(b^p_0,...,b^p_{m-1})$, and $(|\bar{a}^\iota|,|\bar{a}^h|)=(|\bar{b}^\iota|,|\bar{b}^h|)$ satisfy
\begin{itemize}
\item[(iv)] $\bar{a}^\nu$ and $\bar{b}^\nu$ have the same type modulo $\Th(X^\nu)$, with respect to the graph structure on $X^\nu$ given by \Cref{fact: a nice graph is interpretable in its Mekler group},
\item[(v)] $\bar{a}^h$ and $\bar{b}^h$ have the same type modulo $\Th(H)$, with respect to the group structure on $H$,
\item[(vi)] $a^\nu_i$ is a handle of $a^p_j$ if and only if $b^\nu_i$ is a handle of $b^p_j$ for each $i<n$ and $j<m$,
\end{itemize}
then $\bar{a}\equiv\bar{b}$.
\end{fact}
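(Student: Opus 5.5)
The plan is to build an automorphism of $\mathbb{G}=\la X\ra\times\la H'\rangle$ in two independent pieces: a permutation of the transversal $X$ extending $f$, and a linear automorphism of the central factor. We then check that the combined map on generators respects all the defining relations. Throughout, $\mathbb{G}$ is taken uncountable and saturated (the monster), so that \Cref{fact: G=<X>H saturated} applies.

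\textbf{Step 1: extend $f$ on $X^\nu$.} By (iii), $f|_{Y^\nu}$ is a partial elementary map of the graph $X^\nu$, whose structure is that of \Cref{fact: a nice graph is interpretable in its Mekler group}. By \Cref{fact: G=<X>H saturated}(i), $X^\nu$ is a saturated graph and hence strongly homogeneous. So $f|_{Y^\nu}$ extends to a graph automorphism $\sigma^\nu$ of $X^\nu$.

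\textbf{Step 2: extend $f$ on $X^p$ and $X^\iota$.} For each $g\in X^\nu$ consider the fibre $X^p_g=\{g'\in X^p: g \text{ is a handle of } g'\}$. By (ii), $f$ sends $Y\cap X^p_g$ into $X^p_{\sigma^\nu(g)}$. By (i), $f$ sends $p$-elements to $p$-elements and $\iota$-elements to $\iota$-elements. By \Cref{fact: G=<X>H saturated}(ii), all fibres have the same large cardinality, namely the saturation cardinal. Since $Y$ is small, we can extend $f|_{Y\cap X^p_g}$ to a bijection $X^p_g\to X^p_{\sigma^\nu(g)}$. In the same way, using \Cref{fact: G=<X>H saturated}(iii), we extend $f|_{Y^\iota}$ to a bijection of $X^\iota$. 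Putting these together gives a bijection $\sigma:X\to X$ extending $f$ that preserves the sorts $\nu,p,\iota$, the graph on $X^\nu$, and the handle relation.

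\textbf{Step 3: extend $\sigma$ to a group automorphism (main obstacle).} The key point is that $\la X\ra$ is presented, as a $2$-nilpotent group of exponent $p$, freely on $X$ subject only to commutation relations $[x,y]=1$, where $x,y\in X$ commute exactly when one of the following holds:
\begin{itemize}
\item[$\ast$] both lie in $X^\nu$ and are adjacent;
\item[$\ast$] one is in $X^p$ and the other is its handle.
\end{itemize}
In all other cases the commutators $[x,y]$ are independent in the centre. This is exactly the content of the structure theory behind \cite[Theorem A.3.14]{Hod93} and \Cref{fact: X H are type definable}: $\la X\ra$ is the Mekler group of the graph on $X$ whose edges are the commuting pairs.

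I expect extracting this presentation precisely to be the main difficulty. In particular one must check that distinct $p$-elements with a common handle, and $\iota$-elements, contribute no extra relations; this is where properness and the independence clauses in the definition of a transversal are used.

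Granting the presentation, $\sigma$ preserves the commutation graph on $X$, so it induces an automorphism of $\la X\ra$. For the central factor, take any linear automorphism $\tau$ of $H$ as an $\mathbb{F}_p$-vector space. Then $\sigma\times\tau$ is an automorphism of $\mathbb{G}$. For the "moreover" clause: $\Th(H)$ has quantifier elimination and $H$ is saturated, so equality of types means $h,k$ satisfy the same linear dependencies. Hence the partial linear map $h\mapsto k$ extends to such a $\tau$.

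\textbf{Step 4: the "in particular" part.} Let $Y=\bar a^\nu\bar a^p\bar a^\iota$, let $Z=\bar b^\nu\bar b^p\bar b^\iota$, and let $f$ be the coordinatewise map $Y\to Z$. Then:
\begin{itemize}
\item[$\ast$] condition (i) holds because corresponding coordinates lie in the same part of the transversal;
\item[$\ast$] condition (ii) follows from (vi);
\item[$\ast$] condition (iii) follows from (iv).
\end{itemize}
Condition (v) supplies the hypothesis of the moreover clause for $\bar a^h,\bar b^h$. The resulting automorphism sends $\bar a$ to $\bar b$, so $\bar a\equiv\bar b$.
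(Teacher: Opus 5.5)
Your overall route is the natural one: extend $f$ to a sort-, graph- and handle-preserving permutation of $X$, let it induce an automorphism of $\langle X\rangle$, and multiply by a linear automorphism of $H$. Importing the presentation of $\langle X\rangle$ from the structure theory in \cite{Hod93} is fair, since the paper itself only cites this Fact. But Step 2 has a genuine gap.

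The sentence ``By (ii), $f$ sends $Y\cap X^p_g$ into $X^p_{\sigma^\nu(g)}$'' is justified only when the handle $g$ lies in $Y$. Suppose $g'\in Y^p$ has its handle $g$ outside $Y$, and let $k$ be the handle of $f(g')$. Then $\sigma^\nu(g)$ was fixed in Step 1 without reference to $f(g')$, and (iii) gives no control over the graph type of $Y^\nu g$ against that of $Z^\nu k$. Dually, since (ii) is only a one-way implication, the fibrewise maps need not be onto $Z\cap X^p_{\sigma^\nu(g)}$.

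This cannot be patched, because the statement as written fails in this situation. By \Cref{fact: handle is unique}, for any graph formula $\theta$ the formula $\exists h\,(h\in 1^\nu\wedge hx=xh\wedge\theta([h]_\sim))$ says that the handle of $x$ satisfies $\theta$. So the graph type of the handle is part of the type of a $p$-element. Two counterexamples follow:
\begin{itemize}
\item[$\ast$] Let $u,v\in X^\nu$ have different graph types (e.g.\ different finite degrees), and take $g'\in X^p_u$, $k'\in X^p_v$. Then $Y=\{g'\}$, $Z=\{k'\}$ satisfy (i)--(iii) vacuously, but $g'\not\equiv k'$. The same tuples violate the ``in particular'' clause.
\item[$\ast$] Let $Y=\{g,g'\}$ and $Z=\{g,k'\}$, where $g$ is the handle of $k'$ but not of $g'$, and $f(g)=g$, $f(g')=k'$. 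This satisfies (i)--(iii), yet $[g,g']\neq 1=[g,k']$.
\end{itemize}

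The missing hypothesis is that $Y$ contains the $X^\nu$-handles of its elements of $X^p$. In the ``in particular'' clause, this means every handle of an entry of $\bar a^p$ occurs in $\bar a^\nu$. This is what the paper arranges before applying the Fact (condition (i), resp.\ (ix), in the Mekler theorems). With this hypothesis your proof works:
\begin{itemize}
\item[$\ast$] $Z$ is then handle-closed as well.
\item[$\ast$] For $g\in Y^\nu$, uniqueness of handles inside $X^\nu$ gives $f(Y\cap X^p_g)=Z\cap X^p_{f(g)}$.
\item[$\ast$] For $g\notin Y$, both $Y\cap X^p_g$ and $Z\cap X^p_{\sigma^\nu(g)}$ are empty.
\end{itemize}
So the fibrewise extension is a bijection preserving handles in both directions.

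Two smaller points. First, in Step 4 the coordinatewise map is a well-defined bijection only if repetitions among the entries of $\bar a^p,\bar a^\iota$ match those of $\bar b^p,\bar b^\iota$; (iv)--(vi) do not guarantee this. Second, \Cref{fact: G=<X>H saturated}(ii),(iii) only say ``uncountable''. The claim that all fibres and $X^\iota$ have cardinality $|\mathbb{G}|$ needs a short saturation argument of its own.
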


Every nice graph is interpretable in its Mekler group as mentioned in \Cref{fact: a nice graph is interpretable in its Mekler group},
but the converse does not hold in general: a nice graph and its Mekler group are not bi-interpretable. Moreover, some model-theoretic properties may fail to hold in a Mekler group, even when the original nice graph satisfies them. For example, Mekler groups are never distal, even if the underlying nice graph is distal \cite{BPT24}.
 
\smallskip  Nevertheless, a growing list of model-theoretic properties is known to be preserved under this uniform way of constructing groups. We leave some of them below.

\begin{fact}
Let $C$ be a nice graph and $G(C)$ its Mekler group. 
\begin{itemize}
\item[(i)] \cite{Mek81} $\Th(C)$ is stable if and only if $\Th(G(C))$ is stable.
\item[(ii)] \cite{Bau02} $\Th(C)$ is $CM$-trivial if and only if $\Th(G(C))$ is $CM$-trivial.
\item[(iii)] \cite{Bau02} $\Th(C)$ is simple if and only if $\Th(G(C))$ is simple.
\item[(iv)] \cite{CH18} $\Th(C)$ is NIP if and only if $\Th(G(C))$ is NIP.
\item[(v)] \cite{CH18} $\Th(C)$ is NIP$_n$ if and only if $\Th(G(C))$ is NIP$_n$ for each $n<\omega$.
\item[(vi)] \cite{CH18} $\Th(C)$ is NTP$_2$ if and only if $\Th(G(C))$ is NTP$_2$.
\item[(vii)] \cite{Ahn19} $\Th(C)$ is NTP$_1$ if and only if $\Th(G(C))$ is NTP$_1$.
\item[(viii)] \cite{AKL21} $\Th(C)$ is NATP if and only if $\Th(G(C))$ is NATP.
\end{itemize}
\end{fact}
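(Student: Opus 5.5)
This fact collects results from the literature, so the proof amounts to citing the references for each item. Still, I would organise it as one two-direction argument, so that the same template can be reused later for NCTP and NBTP.

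\emph{Direction $\Th(G(C))$ good $\Rightarrow$ $\Th(C)$ good.} By \Cref{fact: a nice graph is interpretable in its Mekler group}, $C$ is elementarily equivalent to the graph $\Gamma(G)$, which is interpretable in $G(C)$. Each of stability, simplicity, NIP, NIP$_n$, NTP$_2$, NTP$_1$, NATP and CM-triviality is inherited by theories interpretable in a theory with that property. For the properties defined by a forbidden configuration, a witnessing configuration for $\Th(C)$ transfers directly. Concretely, one pulls back the formula along the interpretation: the parameters become tuples of representatives of $\sim$-classes in $1^\nu$, and the consistency and inconsistency patterns are unchanged. For CM-triviality one uses instead that it passes to interpretable structures in the stable setting, as in \cite{Bau02}. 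So this direction is routine.

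\emph{Direction $\Th(C)$ good $\Rightarrow$ $\Th(G(C))$ good.} Here I would use an indiscernible criterion for each property. These are the ones quoted in the introduction:
\begin{itemize}
\item \cite[Fact 5.3]{CH18} for NTP$_2$;
\item \cite[Proposition 3.7]{Ahn19} for NTP$_1$;
\item \cite[Theorem 3.25]{AKL21} for NATP;
\item the analogous ``order indiscernible over some conjugate $b'$'' criteria for NIP and stability.
\end{itemize}
Given an indiscernible configuration $(a_\eta)$ in a saturated $\mathbb{G}=\langle X\rangle\times\langle H\rangle$ and a parameter $b$, the steps are as follows.
\begin{enumerate}
\item Using the type-definability of \Cref{fact: X H are type definable} and the modeling property (\Cref{fact: s-indisc}), choose the transversal $X$ and the basis $H'$ so that each $a_\eta$ is a tuple of words in elements of $X\cup H'$. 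Arrange that the supports of these words, together with $b$'s support, form an indiscernible configuration of tuples split into $X^\nu$, $X^p$, $X^\iota$ and $H'$ parts.
\item Apply the hypothesis on $\Th(C)$ only to the $X^\nu$-parts, viewed in the graph $\Gamma$. Apply stability of the vector-space parts ($X^p$ modulo handles, $X^\iota$, $H$) to the remaining coordinates.
\item Produce $b'$ realising the required conjugacy and indiscernibility in each sort separately. Glue these using \Cref{fact: bijection->automorphism}: matching graph types, $\Th(H)$-types and the handle relation yields an automorphism, hence equality of types in $\mathbb{G}$.
\end{enumerate}

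\emph{Main obstacle.} The hardest step is (3), specifically the handle relation between $X^p$ and $X^\nu$. The $X^p$-coordinates are not independent of the graph coordinates: a $p$-element's handle lies in $X^\nu$, so indiscernibility in the graph part must be lifted compatibly with the handles. I would first try to handle this by choosing $b'$'s $p$-part fresh, using the uncountably many $p$-elements per handle given by \Cref{fact: G=<X>H saturated}(ii). The handle pattern is then determined by the $X^\nu$-part alone, and (vi) of \Cref{fact: bijection->automorphism} is satisfied automatically. The item-specific details are in \cite{Mek81,Bau02,CH18,Ahn19,AKL21}.
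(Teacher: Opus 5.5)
Insofar as your proposal rests on the citations, it agrees with the paper, which gives no argument for this Fact beyond the references. The uniform template you add, however, is not a proof of the eight items, and parts of it are false.

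\emph{No conjugate-$b'$ criterion for NIP or stability.} There is no ``order indiscernible over some conjugate $b'$'' criterion for either property. Take the sequence version of that shape: for every indiscernible $(a_j)_{j<\kappa}$ and finite $b$ there are $i$ and $b'\equiv_{a_i}b$ with $(a_j)_{j\ge i}$ indiscernible over $b'$. This holds in the random graph. Choose $i$ so that $b$ meets no non-constant coordinate of $a_i$, then use the extension property to copy the edges between $b$ and $a_i$ onto every $a_j$, $j\ge i$. So such a criterion cannot detect the independence or order property.

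\emph{What characterizes these properties instead.} Stability, NIP and NIP$_n$ are characterized by collapse of generalized indiscernibles: order indiscernibles collapse to indiscernible sets, and ordered (hyper)graph indiscernibles collapse to order indiscernibles. That is the framework of \cite{BPT24}, not your steps (1)--(3). Simplicity is reached by your template only indirectly, as NTP$_1$ plus NTP$_2$.

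\emph{CM-triviality.} It is a condition on canonical bases and algebraic closure, not a configuration or indiscernibility property, so your hard direction has nothing to apply to; \cite{Bau02} handles it by a forking analysis in $G(C)$. Its easy direction is also not a bare interpretability argument, since that would amount to preservation of CM-triviality under reducts. What is used is that the interpreted graph is stably embedded with induced structure exactly the graph structure. This follows from \Cref{fact: bijection->automorphism}.

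\emph{The handle step.} For the items where your scheme is the right one (NTP$_2$, NTP$_1$, NATP, and this paper's NCTP and NBTP), your treatment of the handle issue is incomplete. Choosing the $p$-part of $b'$ fresh, with handles transported, is essentially what the automorphism $\sigma$ does in the proof of \Cref{thm: Mekler group NCTP}. It does not by itself make the handle pattern depend only on the $\nu$-part.

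Suppose a $p$-coordinate of $b$ has as handle a non-constant $\nu$-coordinate of $\bar a_{0^i{}^\frown(1)}$, or $b$ shares an element with a non-constant coordinate of $\bar a_{0^i{}^\frown(1)}$. Then every $b'\equiv_{\bar a_{0^i{}^\frown(1)}}b$ inherits this. A $p$-element has exactly one handle in $X^\nu$ (\Cref{fact: handle is unique}). So the corresponding commutation (or equality) holds with $\bar a_{0^i{}^\frown(1)}$ but fails with $\bar a_{0^i{}^\frown(1)^\frown 0^j}$ for $j>0$, however the remaining $p$-elements are chosen. Indiscernibility of the path over $b'$ is then impossible.

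One has to normalize beforehand, as in conditions (i)--(iii) of that proof:
\begin{itemize}
\item handles of $\bar a^p_\eta$ lie in $\bar a^\nu_\eta$;
\item each handle of a $p$-coordinate of $b$ is either a constant $\nu$-coordinate of the tree or avoids all $\bar a^\nu_\eta$;
\item $i$ is chosen to avoid the finitely many remaining coincidences between $b$ and non-constant coordinates.
\end{itemize}
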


More recently, a widely applicable preservation result has been established by Boissonneau, Papadopoulos, and Touchard.

\begin{fact}\cite{BPT24}
$\Th(C)$ collapses $\CI$-indiscernible if and only if  $\Th(G(C))$ collapses $\CI$-indiscernible. Thus any dividing line characterized via collapsing indiscernibles is preserved by Mekler’s construction. In particular,
\begin{itemize}
\item[(ix)] $\Th(C)$ is NFOP$_2$ if and only if $\Th(G(C))$ is NFOP$_2$.
\end{itemize}
\end{fact}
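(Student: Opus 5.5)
The plan is to prove both directions separately, using the structure theory of $\mathbb{G}\models\Th(G(C))$ from \Cref{fact: G=<X>H saturated}, \Cref{fact: X H are type definable} and \Cref{fact: bijection->automorphism}. Here ``$T$ collapses $\CI$-indiscernibles'' is read as: every $\CI$-indiscernible sequence in a model of $T$ is already indiscernible for a coarser index structure $\CJ$ (a reduct of $\CI$). The argument assumes that $\CI$-indiscernibles have the modeling property, as in the definition of collapse.

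\emph{From $G(C)$ to $C$.} By \Cref{fact: a nice graph is interpretable in its Mekler group}, a model of $\Th(C)$ is interpretable in $\mathbb{G}$ as $\Gamma(\mathbb{G})$, with universe $1^\nu/\!\sim$. Let $(c_i)_{i\in\CI}$ be a $\CI$-indiscernible in $\Gamma$. I lift it to representatives $(g_i)$ in $1^\nu$. Applying the modeling property, I replace $(g_i)$ by an $\CI$-indiscernible locally based on it. Its image under the interpretation has the same EM-type as $(c_i)$. By hypothesis the lifted sequence is $\CJ$-indiscernible, so its image is too. Since $(c_i)$ and the image share all finite EM-data, $(c_i)$ is $\CJ$-indiscernible as well. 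This direction is routine.

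\emph{From $C$ to $G(C)$.} Let $(a_i)_{i\in\CI}$ be $\CI$-indiscernible in a saturated $\mathbb{G}$. Fix a transversal $X$ and a basis $H'$ with $\mathbb{G}=\la X\ra\times\la H'\ra$. Write each $a_i$ as a word in finitely many elements $\bar x_i\in X$ and $\bar h_i\in H'$; after padding, every $a_i$ uses the same word shape. By \Cref{fact: X H are type definable}, ``$\bar x\bar h$ extends to such a decomposition'' is type-definable. Using the modeling property together with compactness, I then produce a new indiscernible $(a_i'\bar x_i\bar h_i)_{i\in\CI}$, jointly $\CI$-indiscernible, realizing the same EM-type of the $a_i$. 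Here the $\bar x_i,\bar h_i$ lie in some transversal and basis, and each $a_i'$ is the fixed word in $\bar x_i\bar h_i$. It suffices to show that this expanded sequence is $\CJ$-indiscernible. By \Cref{fact: bijection->automorphism}, the type of a finite union $\bigcup_{i\in I_0}\bar x_i\bar h_i$ is determined by four pieces of data: the graph type of its $X^\nu$-part in $\Gamma$; the handle relations between its $X^\nu$-part and its $X^p$-part; the type of its $H$-part in $\Th(H)$; and the equality pattern among the sorts. I then check $\CJ$-indiscernibility for each piece in turn.
\begin{itemize}
\item[$\ast$] \emph{The $X^\nu$-part.} The $\nu$-parts together with the handles of the $p$-parts form an $\CI$-indiscernible in $\Gamma\models\Th(C)$, hence a $\CJ$-indiscernible by hypothesis. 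This also controls the handle relation and the equality among handles.
\item[$\ast$] \emph{The $H$-part.} Since $\Th(H)$ is a pure $\mathbb{F}_p$-vector space with quantifier elimination, the type of the $H$-part reduces to linear dependence, and for basis elements this is just the equality pattern.
\item[$\ast$] \emph{The $X^\iota$-part.} Likewise for the $X^\iota$-elements, only the equality pattern matters.
\end{itemize}

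\emph{Main obstacle.} The hard step is the equality patterns among the $p$-, $\iota$- and $H$-coordinates, which are not visible in the graph. These patterns are given by quantifier-free conditions in $\CI$ that hold along an $\CI$-indiscernible. My first attempt is to encode each such equality pattern inside $C$: take any $\CI$-indiscernible of elements of the infinite graph realizing the same equality pattern, e.g.\ by mapping each coordinate class to a distinct vertex. The collapse hypothesis for $\Th(C)$ then forces that pattern to be $\CJ$-invariant. Combining the four pieces through \Cref{fact: bijection->automorphism} yields an automorphism witnessing $\CJ$-indiscernibility, first of the $\bar x_i\bar h_i$ and hence of the $a_i$.
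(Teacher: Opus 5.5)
The paper gives no proof of this statement; it is quoted from \cite{BPT24}, so your argument has to stand on its own. Your direction from $\Th(G(C))$ to $\Th(C)$ is fine. In the other direction, suppose you already have a jointly $\CI$-indiscernible $(a'_i\bar x_i\bar h_i)_{i\in\CI}$, with all $\bar x_i\bar h_i$ inside one transversal and basis and each $a'_i$ a fixed word in them. Then your type analysis via \Cref{fact: bijection->automorphism} is sound. The step you single out as the main obstacle also goes through. Inject the coordinate values into $\Gamma$ and apply the modeling property in $\Th(C)$; this is needed, because the injected copy is not itself $\CI$-indiscernible. Local basing preserves an equality pattern that is determined by $\CI$-quantifier-free types, and the collapse hypothesis then makes the pattern $\CJ$-determined.

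The genuine gap is the reduction before that. \emph{``After padding, every $a_i$ uses the same word shape''} is false for a fixed transversal: all $a_i$ may have the same type while the words representing them have unbounded length. For instance:
\begin{itemize}
\item[$\ast$] let $a_q$ ($q\in\mathbb{Q}$) be distinct elements of an $\iota$-transversal $X'$, which form an indiscernible sequence by \Cref{fact: bijection->automorphism};
\item[$\ast$] enumerate $\mathbb{Q}=\{q_n\}_{n<\omega}$, and let $X$ be obtained from $X'$ by replacing $a_{q_n}$ with $a_{q_n}a_{q_{n-1}}^{-1}$ for $n\ge 1$;
\item[$\ast$] then $a_{q_n}$ is a product of exactly $n+1$ distinct elements of $X$, and any word for it must use all of them, since that is its support modulo the centre.
\end{itemize}
So no single word, padded or not, works for more than finitely many indices. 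The modeling property and compactness cannot repair this, because ``$a_i$ is some word in transversal elements'' is an infinite disjunction over terms, which a locally based sequence need not satisfy.

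What you need is one of the following, and neither comes for free from the modeling property:
\begin{itemize}
\item[$\ast$] a partition theorem for $\aleph_0$-colourings of the points of a stretched copy of $\CI$ (colour $=$ word) that returns a genuinely $\CI$-embedded monochromatic copy; or
\item[$\ast$] a way to choose the transversal adapted to the given indiscernible, as $X'$ is in the example.
\end{itemize}
Compare how the paper treats exactly this step for trees in \Cref{thm: Mekler group NCTP}. It passes to height $\kappa$ with $\cf(\kappa)=\kappa$ large and uses \Cref{lem: coloring on omega-kappa}. Even then it only obtains a map preserving $\unlhd$, $\lex$ and $f(\eta)\wedge f(\eta')\unrhd f(\eta\wedge\eta')$. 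That suffices to keep a CTP configuration, but it is not an $\CL_0$-embedding. So it would not preserve the EM-type of an indiscernible tree, which is what a collapse argument requires. Supplying this normal-form step for general $\CI$ is the real content of the result, and your proposal skips it.
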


\section{Mekler's construction of NCTP theories}

In this section, we prove that Mekler’s construction preserves NCTP. As mentioned in the introduction, the key idea is to use the characterization of NCTP via tree indiscernibility, stated below.

\begin{theorem}\label{thm: criterion NCTP}
The following are equivalent.
\begin{itemize}
\item[(i)] $T$ is NCTP.
\item[(ii)] For any parameter $b$, a cardinal $\kappa>2^{|T|+|b|+\aleph_0}$ with $\cf(\kappa)=\kappa$, and a strong indiscernible tree $(a_\eta)_{\eta\in\omega^{<\kappa}}$, there exist $b'$ and $i<\kappa$  such that 
\begin{itemize}
\item[$\ast$] $b'\equiv_{a_{0^i{}^\frown(1)}}\!b$,
\item[$\ast$] $(a_{0^i{}^\frown(1)^\frown 0^j})_{j<\kappa}$ is order indiscernible over $b'$.
\end{itemize}
\end{itemize}
\begin{proof}
Suppose (i). Choose any $b$, $\kappa>2^{|T|+|b|+\aleph_0}$ with $\cf(\kappa)=\kappa$, and a strongly indiscernible tree $(a_\eta)_{\eta\in \omega^{<\kappa}}$. Since $\kappa$ is sufficiently large, there exists $I\subseteq \kappa$ with $|I|=\kappa$ such that $a_{0^i{}^\frown(1)}\equiv_ba_{0^{i'}{}^\frown(1)}$ for all $i,i'\in I$. Choose any $i\in I$ and let $p(x,y):=\tp(b,a_{0^i{}^\frown(1)})$. If $\cup_{j<\kappa}p(x,a_{0^i{}^\frown(1)\coc0^j})$ is inconsistent, then there exists a witness of CTP by strong indiscernibility of $(a_\eta)_{\eta\in\omega^{<\kappa}}$. Thus $\cup_{j<\kappa}p(x,a_{0^i{}^\frown(1)\coc0^j})$ is consistent. By Ramsey, we can find $b'\models \cup_{j<\kappa}p(x,a_{0^i{}^\frown(1)\coc0^j})$ such that $(a_{0^i{}^\frown(1)\coc0^j})_{j<\kappa}$ is $b'$-indiscernible.

Now we suppose (ii). If $T$ has CTP, then by compactness, there exist $\varphi(x,y)$ and a strong indiscernible tree $(a_\eta)_{\eta\in \omega^{<\kappa}}$ satisfying the conditions in \Cref{def: ATP CTP BTP} (ii), for some $\kappa>2^{|T|+\aleph_0}$. Then there exists $b\models \{\varphi(x,a_\eta)\}_{\eta\in D}$. Note that $b$ is a finite tuple and hence $\kappa>2^{|T|+|b|+\aleph_0}$. By (ii), there exist $i<\kappa$ and  $b'\equiv_{a_{0^i{}^\frown(1)}}\!b$ such that $(a_{0^i{}^\frown(1)\coc0^j})_{j<\kappa}$ is $b'$-indiscernible. Thus $\{\varphi(x,a_{0^i{}^\frown(1)\coc0^j})\}_{j<\kappa}$ is consistent, so we get a contradiction.
\end{proof}
\end{theorem}

We will also need the following lemma concerning colorings on trees whose height is sufficiently large.

\begin{lemma}\label{lem: coloring on omega-kappa}
Let $\kappa$, $\lambda$, and $\mu$ be cardinals with $\mu<\cf(\kappa)$. For any map $c$ from $\lambda^{<\kappa}$ to $\mu$, there exist $\eta\in \lambda^{<\kappa}$ and $\theta<\mu$ such that for all $\nu\unrhd\eta$, there exists $\rho\unrhd\nu$ such that \[
\{\xi\in\lambda^{<\kappa}:\xi\unrhd\rho^\frown(i),c(\xi)=\theta\}\neq\emptyset
\]
for each $i<\lambda$.
\begin{proof}
By using the same argument in \cite[Remark 3.22]{AKL21}.
\end{proof}
\end{lemma}

\begin{theorem}\label{thm: Mekler group NCTP}
Let $C$ be a nice graph and $G(C)$ its Mekler group. Then $\Th(C)$ is NCTP if and only if $\Th(G(C))$ is NCTP.
\begin{proof}
The implication from the right to the left is clear since $C$ is interpretable in $G(C)$.

Suppose $\Th(C)$ is NCTP and $\Th(G(C))$ has CTP. Fix a sufficiently saturated model $\mathbb{G}$ of $\Th(G(C))$, a transversal $X:=X^\nu{}\cup X^p\cup X^\iota$ of $\mathbb{G}$, and a subgroup $H$ of $Z(\mathbb{G})$ with $\mathbb{G}=\la X\ra\times H$. Let $\kappa$ be a cardinal such that $\kappa>2^{|T|+|\aleph_0|}$ and $\cf(\kappa)=\kappa$. Assume that CTP is witnessed by a formula $\varphi(\bar{x},\bar{y})$ and $(\bar{a}_\eta)_{\eta\in\omega^{<\kappa}}$. For each $\eta\in\omega^{<\kappa}$, there exists a tuple of terms $\bar{t}_\eta$ in the group language such that $\bar{a}_\eta=\bar{t}_\eta(\bar{a}'_\eta)$ for some $\bar{a}'_\eta\in X\cup H$. By \Cref{lem: coloring on omega-kappa}, we can give a $\{\unlhd,\lex\}$-embedding $f$ from $\omega^{<\kappa}$ to $\omega^{<\kappa}$ such that $f(\eta)\wedge f(\eta')\unrhd f(\eta\wedge\eta')$ and $\bar{t}_{f(\eta)}=\bar{t}_{f(\eta')}$ for all $\eta,\eta'\in\omega^{<\kappa}$. Then the image of any given descending comb is a descending comb, the image of any path is still a path. Thus we may assume $\bar{a}_\eta\in X\cup H$ for all $\eta\in\omega^{<\kappa}$, by replacing $\varphi$ with a new formula obtained by adding the term $\bar{t}_{f(\eta)}$ to $\varphi$. Moreover, by applying \Cref{lem: coloring on omega-kappa} again, there exist $(n^\nu, n^p, n^\iota, n^h)\in\omega^4$ such that for each $\eta\in\omega^{<\kappa}$, $\bar{a}_\eta$ is of the form $\bar{a}^\nu_\eta{}^\frown\bar{a}^p_\eta{}^\frown\bar{a}^\iota_\eta{}^\frown\bar{a}^h_\eta$, where $\bar{a}^\nu_\eta:=(a^\nu_{\eta,0},...,a^\nu_{\eta,n^\nu-1})\in X^\nu$, $\bar{a}^p_\eta:=(a^p_{\eta,0},...,a^p_{\eta,n^p-1})\in X^p$, $\bar{a}^\iota_\eta:=(a^\iota_{\eta,0},...,a^\iota_{\eta,n^\iota-1})\in X^\iota$, and $\bar{a}^h_\eta:=(a^h_{\eta,0},...,a^h_{\eta,n^h-1})\in H$. We may assume that $a^\nu_{\eta,i}\neq a^\nu_{\eta,j}$ for each $\eta\in\omega^{<\kappa}$ and $i\neq j<n^\nu$. By applying modeling property, we may assume $(a_\eta)_{\eta\in\omega^{<\kappa}}$ is strongly indiscernible.
By \Cref{fact: handle is unique} and \Cref{fact: X H are type definable}, we may assume that 
\begin{itemize}
\item[(i)] $\{h\in X^\nu: h\text{ is a handle of }a^p_{\eta,i}\text{ for some }i<n^p\}\subseteq \bar{a}^\nu_\eta$ for each $\eta\in\omega^{<\kappa}$.
\end{itemize}

Let $\bar{b}\models\{\varphi(\bar{x},\bar{a}_{0^i{}^\frown(1)})\}_{i<\kappa}$. Then there exist a term $\bar{s}$ and $\bar{c}:=\bar{c}^\nu{}^\frown\bar{c}^p{}^\frown\bar{c}^\iota{}^\frown\bar{c}^h$ with $\bar{c}^\nu\in X^\nu$, $\bar{c}^p\in X^p$, $\bar{c}^\iota\in X^\iota$, and $\bar{c}^h\in H$ such that $\bar{b}=\bar{t}(\bar{c})$. Note that $\bar{c}\models\{\varphi(\bar{s}(\bar{z}),a_{0^i{}^\frown(1)})\}_{i<\kappa}$ and $\varphi(\bar{s}(\bar{z}),\bar{y})$ also witnesses CTP with $(\bar{a}_\eta)_{\eta\in\omega^{<\kappa}}$. Let $\bar{c}^p:=(c^p_0,...,c^p_{m-1})$. If $d$ is a handle of $c^p_i$ for some $i<m$, then by \Cref{fact: handle is unique} again, we may assume that $d$ satisfies one of the following.
\begin{itemize}
\item[(ii)] there exists $j<n^\nu$ such that $d=a^\nu_{\eta,j}$ for all $\eta\in\omega^{<\kappa}$,
\item[(iii)] $d\neq a^\nu_{\eta,j}$ for all $\eta\in\omega^{<\kappa}$ and $j<n^\nu$.
\end{itemize}
Since $\kappa$ is sufficiently large, we may assume that
\begin{itemize}
\item[(iv)] $(\bar{a}^h_{0^{i}{}^\frown(1)^\frown 0^j})_{j<\omega}\equiv_{\bar{c}^h}(\bar{a}^h_{0^{i'}{}^\frown(1)^\frown 0^j})_{j<\omega}$ modulo $\Th(H)$ for all $i,i'<\kappa$, where $\Th(H)$ is a theory of the group structure on $H$.
\end{itemize}

Note that $X^\nu$ interprets a graph structure whose theory is $\Th(C)$, which is NCTP, and $(\bar{a}_\eta^\nu)_{\eta\in\omega^{<\kappa}}$ is still strongly indiscernible modulo $\Th(X^\nu)$. Thus by \Cref{thm: criterion NCTP} and \Cref{fact: G=<X>H saturated}, there exist $i^*<\kappa$ and $\bar{c}^{\nu*}\in X^\nu$ such that 
\begin{itemize}
\item[(v)] $\bar{c}^{\nu*}\bar{a}^\nu_{0^{i*}{}^\frown(1)}$ and $\bar{c}^\nu\bar{a}^\nu_{0^{i*}{}^\frown(1)}$ have the same type modulo $\Th(X^\nu)$,
\item[(vi)] $(\bar{a}^\nu_{0^{i*}{}^\frown(1)^\frown 0^j})_{j<\omega}$ is order indiscernible over $\bar{c}^{\nu*}$ modulo $\Th(X^\nu)$.
\end{itemize}

On the other hand, $\Th(H)$ is also NCTP since it is stable, and $(\bar{a}_\eta^h)_{\eta\in\omega^{<\kappa}}$ is strongly indiscernible modulo $\Th(H)$. Thus by \Cref{thm: criterion NCTP} and \Cref{fact: G=<X>H saturated} again, there exist $i^\dagger<\kappa$ and $\bar{c}^{h\dagger}\in H$ such that
\begin{itemize}
\item[(vii)] $\bar{c}^{h\dagger}\bar{a}^h_{0^{i\dagger}{}^\frown(1)}$ and $\bar{c}^h\bar{a}^h_{0^{i\dagger}{}^\frown(1)}$ have the same type modulo $\Th(H)$,
\item[(viii)] $(\bar{a}^h_{0^{i\dagger}{}^\frown(1)^\frown 0^j})_{j<\omega}$ is order indiscernible over $\bar{c}^{h\dagger}$ modulo $\Th(H)$.
\end{itemize}
By (iv), we may assume $i^*=i^\dagger$. Put $\bar{c}^{h*}:=\bar{c}^{h\dagger}$. By (i) and \Cref{fact: bijection->automorphism}, there is an automorphism $\sigma$ on $\mathbb{G}$ sending $\bar{c}^\nu\bar{c}^\iota\bar{c}^h$ to $\bar{c}^{\nu*}\bar{c}^\iota\bar{c}^{h*}$ over $\bar{a}_{0^{i*}{}^\frown(1)}$. Let $\bar{c}^{p*}:=\sigma(\bar{c}^p)$ and $\bar{c}^*:=\bar{c}^{\nu*}{}^\frown\bar{c}^{p*}{}^\frown\bar{c}^\iota{}^\frown\bar{c}^{h*}$. Then $\bar{c}^{*}\models\varphi(\bar{s}(\bar{z}),\bar{a}_{0^{i*}{}^\frown(1)})$. By (ii), (iii), and \Cref{fact: bijection->automorphism} again, we have $\bar{a}_{0^{i*}{}^\frown(1)}\bar{c}^*\equiv\bar{a}_{0^{i*}{}^\frown(1)^\frown0^j}\bar{c}^*$ for each $j<\kappa$. Thus $\bar{c}^*\models\{\varphi(\bar{s}(\bar{z}),\bar{a}_{0^{i*}{}^\frown(1)^\frown 0^j})\}_{j<\kappa}$. This is a contradiction.
\end{proof}
\end{theorem}

\section{Mekler's construction of NBTP theories}

In this section, we show that Mekler’s construction preserves NBTP. The overall strategy is similar to the NCTP case. For technical convenience, we first slightly refine the definition of NBTP. More precisely, we simplify the consistency part appearing in the original definition and divide the inconsistency part into two separate patterns. This reformulation makes it more suitable to apply the appropriate notion of indiscernibility for NBTP.

\begin{definition}\label{def: strict left/right-leaning/veering, direct sibling}
Let $X\subseteq \omega^{<\omega}$ and $\kappa$ a cardinal.
\begin{itemize}
\item[(i)] $X$ is a {\it strict left-leaning path} if $\emptyset\notin X$ and there is some enumeration $\{\eta_i\}_{i<\kappa}$ of $X$ such that $\eta_i^- {^\frown} (j)\lhd\eta_{i+1}$ for some $j< t(\eta_i)$ for all $i<\kappa$. 
\item[(ii)] $X$ is a {\it strict right-veering path} if $\emptyset\notin X$ and there is some enumeration $\{\eta_i\}_{i<\kappa}$ of $X$ such that $\eta_i^- {^\frown} (j)\lhd \eta_{i+1}$ for some $j> t(\eta_i)$ for all $i<\kappa$.
\item[(iii)] $X$ is a {\it direct sibling set} if $\emptyset\notin X$ and there is some enumeration $\{\eta_i\}_{i<\kappa}$ of $X$ such that $\eta_i^- {^\frown} (j)= \eta_{i+1}$ for some $j>t(\eta_i)$ for all $i<\kappa$.
\end{itemize}
\end{definition}

\begin{lemma}\label{lem: weak BTP}
$T$ has BTP if and only if there exist $\varphi(x,y)$, $(a_\eta)_{\eta\in\omega^{<\omega}}$, and $k<\omega$ such that
\begin{itemize}
\item[(i)] $\{\varphi(x,a_\eta)\}_{\eta\in X}$ is consistent for any strict left-leaning path $X$,
\item[(ii)] $\{\varphi(x,a_\eta)\}_{\eta\in X}$ is $k$-inconsistent for any strict right-veering path $X$,
\item[(iii)] $\{\varphi(x,a_\eta)\}_{\eta\in X}$ is $k$-inconsistent for any direct sibling set $X$.
\end{itemize}
\begin{proof}
The implication from the left to the right is trivial. Assume that $\varphi(x,y)$, $(a_\eta)_{\eta\in\omega^{<\omega}}$, and $k<\omega$ satisfy (i) and (ii). We define a map $f$ from $\omega^{<\omega}$ to $\omega^{<\omega}$ by
\[
    f(\eta)= 
\begin{cases}
    (1) & \text{if } \;\; l(\eta)=0\\
    (0)^\frown(2 t(\eta)+1) & \text{if } \;\; l(\eta)=1\\
    f(\eta^-)^-{}^\frown(2 t(\eta^-))^\frown (2 t(\eta)+1)  & \text{if } \;\; l(\eta)>1
    \end{cases}
\]
For each $\eta\in\omega^{<\omega}$, let $b_\eta:=a_{f(\eta)}$. It is easy to check that $\{\varphi(x,b_\eta)\}_{\eta\in X}$ is consistent for any left-leaning path $X$, and that $\{\varphi(x,b_\eta)\}_{\eta\in X}$ is $k^2$-inconsistent for any right-veering path $X$.
\end{proof}
\end{lemma}

\begin{definition}
We say $\varphi(x,y)$ witnesses {\it BTP$^-$} with $(a_\eta)_{\eta\in\omega^{<\omega}}$ if they satisfy (i), (ii), and (iii) in \Cref{lem: weak BTP}.
\end{definition}

We now work with a notion of indiscernibility suitable for NBTP. The notion of s-indiscernibility is well known and has the modeling property as noted in \Cref{fact: s-indisc}. Moreover, applying s-modeling property to a witness of BTP yields an indiscernible tree without destroying BTP. The difficulty, however, is that in such an indiscernible tree, we cannot guarantee that the paths are order-indiscernible.

To overcome this issue, we use the technique developed in \cite{GIL02} and \cite{KR20}. This yields a notion of indiscernibility that is slightly stronger than s-indiscernibility.

\begin{notation}
Let $\kappa$ be a cardinal and $X\subset \kappa$. Then $\omega^{<\kappa}|_X$ is the set of all $\eta\in\omega^{<\kappa}$ such that $l(\eta)\in X$ and $\eta(i)=0$ for each $i\in \text{dom}(\eta)\setminus X$ where $\text{dom}(\eta)$ is the domain of $\eta$. Note that $\omega^{<\kappa}|_\emptyset=\emptyset$.
\end{notation}

\begin{definition}\label{def: level-s-indisc}
Let $\kappa$ be an infinite cardinal. We will say $(a_\eta)_{\eta\in\omega^{<\kappa}}$ is {\it level-s-indiscernible} if it is s-indiscernible and $(a_\eta)_{\eta\in\omega^{<\kappa}|_X}\equiv(a_\eta)_{\eta\in\omega^{<\kappa}|_Y}$ for all $X,Y\subseteq \kappa$ with $|X|=|Y|<\omega$.
\end{definition}

\begin{notation}\cite{KR20}
Let $\alpha$ be an ordinal and $\neg\text{lim}(\alpha)$ the set of all non-limit ordinals $\beta<\alpha$.
\begin{itemize}
\item[(i)] For each $\beta<\alpha$, $\CT_{\beta,\alpha}$ is the set of all functions $\eta$ from $[\beta,\alpha)$ to $\omega$ such that $|\{i\in \text{dom}(\eta):\eta(i)\neq0\}|<\omega$ where $\text{dom}(\eta)$ is the domain of $\eta$.
\item[(ii)] $\CT_\alpha:=\bigcup_{\beta\in\neg\text{lim}(\alpha)}\CT_{\beta,\alpha}$.
\end{itemize}
\smallskip
Let $\eta,\nu\in\CT_\alpha$, $\beta\in\neg\text{lim}(\alpha)$, and $X\subseteq\neg\text{lim}(\alpha)$. 
\begin{itemize}
\item[(iii)] By $\eta\unlhd\nu$, we mean $\eta \subseteq \nu$.
\item[(iv)] By $\eta\perp\nu$, we mean that $\eta\not\!\!\unlhd\,\nu$ and $\nu\not\!\!\unlhd\,\eta$.
\item[(v)] By $\eta\wedge\nu$, we mean the $\unlhd$-maximal $\xi\in\CT_\alpha$ such that $\xi\unlhd\eta$ and $\xi\unlhd\nu$.
\item[(vi)] By $l(\eta)$, we mean the least element of $\text{dom}(\eta)$.
\item[(vii)] $P_\beta:=\{\mu\in\CT_\alpha: l(\mu)=\beta\}$.
\item[(viii)] By $\CT_\alpha|_X$, we mean $\{\mu\in\CT_\alpha: \mu(i)=0\text{ for all }i\in\text{dom}(\mu)\setminus X, l(\mu)\in X\}$. 
\end{itemize}
Thus we can give an $\CL_s$-structure on $\CT_\alpha$ and consider level-s-indiscernibility of $\CT_\alpha$-indexed sets as in \Cref{def: level-s-indisc}. Note that $\age_{\CL_s}(\omega^{<\omega})=\age_{\CL_s}(\CT_\alpha)$ for each infinite ordinal $\alpha$.
\end{notation}

\begin{lemma}\label{lem: level-s-indiscernible witness of BTP}
If $T$ has BTP, then there exist $\varphi(x,y)$, level-s-indiscernible  $(a_\eta)_{\eta\in\omega^{<\omega}}\!$, and $k<\omega$ such that
\begin{itemize}
\item[(i)] $\{\varphi(x,a_\eta)\}_{\eta\in X}$ is consistent for any strict left-leaning path $X$ in $\omega^{<\omega}$,
\item[(ii)] $\{\varphi(x,a_\eta)\}_{\eta\in X}$ is $k$-inconsistent for any strict right-veering path $X$ in $\omega^{<\omega}$,
\item[(iii)] $\{\varphi(x,a_\eta)\}_{\eta\in X}$ is $k$-inconsistent for any direct sibling set $X$ in $\omega^{<\omega}$.
\end{itemize}
\begin{proof}
Suppose $T$ is BTP. Let $\lambda:=2^{\aleph_0+|T|}$ and $\kappa:=\beth_{\lambda^+}(\lambda)$. By using compactness, \Cref{fact: age(I)=age(J)}, and \Cref{lem: weak BTP}, we can find $\varphi(x,y)$, s-indiscernible $(b_\eta)_{\eta\in\CT_\kappa}$, and $k<\omega$ such that
\begin{itemize}
\item[(iv)] $\{\varphi(x,b_\eta)\}_{\eta\in X}$ is consistent for any strict left-leaning $X$ in $\CT_\kappa$,
\item[(v)] $\{\varphi(x,b_\eta)\}_{\eta\in X}$ is $k$-inconsistent for any strict right-veering $X$ in $\CT_\kappa$.
\item[(vi)] $\{\varphi(x,b_\eta)\}_{\eta\in X}$ is $k$-inconsistent for any direct sibling set $X$ in $\CT_\kappa$.
\end{itemize}

\medskip

\noindent{\it Claim.} We can find level-s-indiscernible $(c_\eta)_{\eta\in\CT_\omega}$ such that
\begin{itemize}
\item[(vii)] For all $X\subseteq \omega$ with $|X|<\omega$, there exists $Y\subseteq \kappa$ such that $|Y|=|X|$ and $(c_\eta)_{\eta\in\CT_\omega|_Y}\equiv(b_\eta)_{\eta\in\CT_\kappa|_X}$.
\end{itemize}

\medskip

\noindent{\it Proof of Claim.} We use the argument that appears in \cite[Lemma 5.10]{KR20}. The proof is essentially the same as theirs, but we include it below for completeness. For each $0<n<\omega$, let
\[
\Gamma_n:=\{\tp((b_\eta)_{\eta\in\Tau_\kappa|_X}):X\subseteq \neg\text{lim}(\kappa), |X|=n\}.
\]
By induction on $n<\omega$, we will find $(p_n,F_n,(X_{\xi,n})_{\xi\in F_n})_{n<\omega}$ such that

\begin{itemize}
\item[(viii)] $p_0=\emptyset$ and $p_n\in \Gamma_n$ for $n>0$,
\item[(ix)] \[
\Delta_n((x_\eta)_{\eta\in\Tau_\omega}):=\bigcup_{n'\le n}\bigcup_{\substack{X\subseteq\omega\\ |X|=n'}}p_{n'}((x_\eta)_{\eta\in\Tau_\omega|_X})
\]
is consistent for each $n<\omega$,
\item[(x)] $F_n$ is a cofinal subset of $\lambda^+$ for each $n<\omega$,
\item[(xi)] $X_{\xi,n}\subseteq\neg\text{lim}(\kappa)$ for each $n<\omega$ and $\xi\in F_n$, 
\item[(xii)] $F_{n+1}\subseteq F_n$ for each $n<\omega$,
\item[(xiii)] $|X_{\xi,n}|>\beth_\alpha(\lambda)$ when $\xi$ is the $\alpha$th element of $F_n$,
\item[(xiv)] $X_{\xi,n+1}\subseteq X_{\xi,n}$ for all $n<\omega$ and $\xi\in F_{n+1}$,
\item[(xv)] If $W\subseteq X_{\xi,n}$ and $|W|=n>0$, then $(b_\eta)_{\eta\in\Tau_\kappa|_W}\models p_n$,
\item[(xvi)] $|F_n|=\lambda^+$ for each $n<\omega$.
\end{itemize}
For $n=0$, we let $p_0:=\emptyset$, $F_0:=\lambda^+$, and $X_{\xi,0}:=\neg\text{lim}(\kappa)$ for each $\xi\in F_0$. Suppose $p_n$, $F_n$, and $(X_{\xi,n})_{\xi\in F_n}$ have been constructed. Write $F_n:=\{\xi_\alpha:\alpha<\lambda^+\}$ where $\xi_\alpha$ is the $\alpha$th element of $F_n$. Then for all $\alpha<\lambda^+$, we have $|X_{\xi_{\alpha+n+1},n}|>\beth_{\alpha+n+1}(\lambda)$. For each $\alpha<\lambda^+$, define a $\lambda$-coloring on $\{W\subseteq X_{\xi_{\alpha+n+1},n}:|W|^{n+1}\}$ by $W\mapsto\tp( (b_\eta)_{\eta\in\Tau_\kappa|_W})$. By Erd\"{o}s-Rado, there is a homogeneous subset $X_{\xi_{\alpha+n+1},n+1}\subseteq X_{\xi_{\alpha+n+1},n}$ with $|X_{\xi_{\alpha+n+1},n+1}|>\beth_\alpha(\lambda)$. Let $p_{n+1,\alpha+n+1}$ denote its constant value. By pigeon hole principle, there is $Y\subseteq\{\alpha+n+1:\alpha<\lambda^+\}$ of cardinality $\lambda^+$ so that $\beta,\beta'\in Y$ implies $p_{n+1,\beta}=p_{n+1,\beta'}$. Let $p_{n+1}=p_{n+1,\beta}$ for some/all $\beta\in Y$. Put $F_{n+1}=\{\xi_\beta:\beta\in Y\}$. Then $p_{n+1}$, $F_{n+1}$, and $(X_{\xi,n})_{\xi\in F_{n+1}}$ satisfy the requirements. 
By compactness, $\Delta((x_\eta)_{\eta\in\Tau_\omega})$ is consistent. Let $(c_\eta)_{\eta\in\Tau_\omega}$ be its realization. Then $(c_\eta)_{\eta\in\Tau_\omega}$ is level-s-indiscernible and satisfies (vii). 
$\dashv$

\medskip

Note that $\varphi$ still witnesses BTP with $(c_\eta)_{\eta\in\Tau_\omega}$ by (vii).
Using $(c_\eta)_{\eta\in\CT_\omega}$ and compactness, we can find level-s-indiscernible $(a_\eta)_{\eta\in\omega^{<\omega}}$ satisfying (i), (ii), and (iii).
\end{proof}
\end{lemma}

\begin{theorem}\label{thm: criterion NBTP}
The following are equivalent.
\begin{itemize}
\item[(i)] $T$ is NBTP.
\item[(ii)] For any parameter $b$, a cardinal $\kappa>2^{|T|+|b|+|\aleph_0|}$ with $\cf(\kappa)=\kappa$, and a level-s-indiscernible tree $(a_\eta)_{\eta\in\omega^{<\kappa}}$, there exist $i<\kappa$ and $b'$ such that 
\begin{itemize}
\item[$\ast$] $b'\equiv_{a_{0^i{}^\frown(1)}}\!b$,
\item[$\ast$] $(a_{0^i{}^\frown(j+1)})_{j<\omega}$ or $(a_{0^i{}^\frown(2)^j{}^\frown(1)})_{j<\kappa}$ is order indiscernible over $b'$.
\end{itemize}
\end{itemize}
\begin{proof}
Suppose $T$ is NBTP and choose any $b$, a cardinal $\kappa>2^{|T|+|b|+|\aleph_0|}$ with $\cf(\kappa)=\kappa$, and a level-s-indiscernible tree $(a_\eta)_{\eta\in\omega^{<\kappa}}$. Since $\kappa$ is sufficiently large, there exists $X\subseteq \kappa$ with $|X|=\kappa$ such that $a_{0^i{}^\frown(1)}\equiv_b a_{0^j{}^\frown(1)}$ for all $i,j\in X$. Choose any $i\in X$ and let $p(x,y):=\tp(b, a_{0^i{}^\frown(1)})$. We will show that $\bigcup_{j<\omega}p(x,a_{0^i{}^\frown(j+1)})$ or $\bigcup_{j<\omega}p(x,a_{0^i{}^\frown(2)^j{}^\frown(1)})$ is consistent. Suppose not. Then by level-s-indiscernibility, there exist $\psi_0(x,y),\psi_1(x,y)\in p(x,y)$ and $k_0,k_1<\omega$ such that $\{\psi_0(x,a_{0^i{}^\frown(j+1)})\}_{j<\omega}$ is $k_0$-inconsistent and $\{\psi_1(x,a_{0^i{}^\frown(2)^j{}^\frown(1)})\}_{j<\omega}$ is $k_1$-inconsistent. Let $\psi(x,y):=\psi_0(x,y)\wedge\psi_1(x,y)$. It is easy to check that the set $\{\psi(x,a_\eta)\}_{\eta\in Y}$ is $\max\{k_0,k_1\}$-inconsistent for any strict right-veering path $Y$ and direct sibling set $Y$, starting with $0^i{}^\frown(1)$. On the other hand,  $\psi(x,y)\in p(x,y)$ and $b\models\{\psi(x,a_{0^i{}^\frown(1)})\}_{i\in X}$. By level-s-indiscernibility and \Cref{lem: level-s-indiscernible witness of BTP}, $\psi(x,y)$ witnesses BTP with $(a_\eta)_{\eta\in\omega^{<\kappa}}$, a contradiction. Thus $\bigcup_{j<\omega}p(x,a_{0^i{}^\frown(j+1)})$ or $\bigcup_{j<\omega}p(x,a_{0^i{}^\frown(2)^j{}^\frown(1)})$ is consistent. By standard argument using Ramsey theorem, we can find $b'$ satisfying the conditions we want.

Now we show the converse. Suppose $T$ has BTP. Let $\kappa$ be a cardinal larger than $2^{|T|+|\aleph_0|}$ with $\cf(\kappa)=\kappa$. Then by \Cref{lem: level-s-indiscernible witness of BTP} and compactness, there exist $\varphi(x,y)$ and level-s-indiscernible $(a_\eta)_{\eta\in\omega^{<\kappa}}$ that witness BTP$^-$. Choose any $b\models\{\varphi(x,a_{0^i{}^\frown(1)})\}$. Then there do not exist $i<\kappa$ and $b'\equiv_{0^i{}^\frown(1)}b$ such that $(a_{0^i{}^\frown(j+1)})_{j<\omega}$ or $(a_{0^i{}^\frown(2)^j{}^\frown(1)})_{j<\kappa}$ is order indiscernible over $b'$.
\end{proof}
\end{theorem}

\begin{lemma}\label{lem: simple level-s-indiscernible}
Suppose $T$ is simple. Then for any cardinal $\kappa>2^{|T|+|b|+|\aleph_0|}$ with $\cf(\kappa)=\kappa$, a level-s-indiscernible tree $(a_\eta)_{\eta\in\omega^{<\kappa}}$, and a parameter $b$, there exist   $b'$, $b''$, and $i<\kappa$ such that
\begin{itemize}
\item[(i)] $b\equiv_{a_{0^i{}^\frown(1)}}\!b'\equiv_{a_{0^i{}^\frown(1)}}\!b''$,
\item[(ii)] $(a_{0^i{}^\frown(1+j)})_{j<\omega}$ is order indiscernible over $b'$,
\item[(iii)] $(a_{0^i{}^\frown(2)^j{}^\frown(1)})_{j<\kappa}$ is order indiscernible over $b''$.
\end{itemize} 
\begin{proof}
Suppose $T$ is simple and choose $\kappa$, $(a_\eta)_{\eta\in\omega^{<\kappa}}$, and $b$ as above. Then there exists $X\subseteq \kappa$, with $|X|=\kappa$ such that $a_{0^i{}^\frown(1)}\equiv_b a_{0^j{}^\frown(1)}$ for all $i,j\in X$. Choose any $i\in X$ and let $p(x,y):=\tp(b,a_{0^i{}\frown(1)})$. It is enough to show that $\bigcup_{j<\omega}p(x,a_{0^i{}^\frown(j+1)})$ and $\bigcup_{j<\omega}p(x,a_{0^i{}^\frown(2)^j{}^\frown(1)})$ is consistent. To get a contradiction, first we assume that $\bigcup_{j<\omega}p(x,a_{0^i{}^\frown(j+1)})$ is not consistent. Then there exists $\psi_0(x,y)\in p(x,y)$ such that
\begin{itemize}
\item[(iv)] $\{\psi_0(x,a_\eta)\}_{\eta\in X}$ is consistent for any strict left-leaning $X$ in $\omega^{<\kappa}$,
\item[(v)] $\{\psi_0(x,a_\eta)\}_{\eta\in X}$ is inconsistent for any direct sibling $X$ in $\omega^{<\kappa}$.
\end{itemize} 
Recall the map $f:\omega^{<\omega}\to\omega^{<\omega}$ in \Cref{lem: weak BTP} and put $b_\eta:=a_{f(\eta)}$ for each $\eta\in\omega^{<\omega}$. Then $\psi_0(x,y)$ witnesses tree property with $(b_\eta)_{\eta\in\omega^{<\omega}}$. Since we assume $T$ is simple, it is a contradiction. $\bigcup_{j<\omega}p(x,a_{0^i{}^\frown(j+1)})$ is consistent.

Now we assume $\bigcup_{j<\omega}p(x,a_{0^i{}^\frown(2)^j{}^\frown(1)})$ is inconsistent. Similarly we can find $\psi_1(x,y)\in p(x,y)$ such that
\begin{itemize}
\item[(vi)] $\{\psi_1(x,a_\eta)\}_{\eta\in X}$ is consistent for any strict left-leaning $X$ in $\omega^{<\kappa}$,
\item[(vii)] $\{\psi_1(x,a_\eta)\}_{\eta\in X}$ is inconsistent for any strict right-veering $X$ in $\omega^{<\kappa}$.
\end{itemize}
Define a map $g$ from $\omega^{<\omega}$ to $\omega^{<\omega}$ by
\[
    g(\eta)= 
\begin{cases}
    (1) & \text{if } \;\; \eta=\emptyset\\
    f(\eta^-)^-{}^\frown(0)^\frown (2)^{t(\eta)}{}^\frown(1) & \text{if } \;\; \eta\neq\emptyset
    \end{cases}
\]
and put $c_\eta:=a_{g(\eta)}$ for each $\eta\in \omega^{<\omega}$. Then $\psi_1$ witnesses tree property with $(c_\eta)_{\eta\in\omega^{<\omega}}$ and it is a contradiction. Thus $\bigcup_{j<\omega}p(x,a_{0^i{}^\frown(2)^j{}^\frown(1)})$ is consistent.
\end{proof}
\end{lemma}

\begin{theorem}
Let $C$ be a nice graph and $G(C)$ its Mekler group. Then $\Th(C)$ is NBTP if and only if $\Th(G(C))$ is NBTP.
\begin{proof}
As in \Cref{thm: Mekler group NCTP}, it is enough to show that the implication from the left to the right. Suppose $\Th(C)$ is NBTP and $\Th(G(C))$ is BTP and fix a sufficiently saturated model $\mathbb{G}$ of $\Th(G(C))$, a transversal $X:=X^\nu{}\cup X^p\cup X^\iota$ of $\mathbb{G}$, and a subgroup $H$ of $Z(\mathbb{G})$ with $\mathbb{G}=\la X\ra\times H$. Let $\kappa$ be a cardinal such that $\kappa>2^{|T|+|\aleph_0|}$ and $\cf(\kappa)=\kappa$. By \Cref{lem: level-s-indiscernible witness of BTP} and compactness, there exist $\varphi'(\bar{x},\bar{w})$ and level-s-indiscernible $(\bar{a}'_\eta)_{\eta\in\kappa^{<\kappa}}$ such that
\begin{itemize}
\item[(i)] $\{\varphi'(\bar{x},\bar{a}'_\eta)\}_{\eta\in X}$ is consistent for any strict left-leaning path $X$ in $\kappa^{<\kappa}$,
\item[(ii)] $\{\varphi'(\bar{x},\bar{a}'_\eta)\}_{\eta\in X}$ is inconsistent for any strict right-veering path $X$ in $\kappa^{<\kappa}$,
\item[(iii)] $\{\varphi'(\bar{x},\bar{a}'_\eta)\}_{\eta\in X}$ is inconsistent for any direct sibling set $X$ in $\kappa^{<\kappa}$.
\end{itemize}
For each $\eta\in\kappa^{<\kappa}$, there exist $I_\eta\subseteq\kappa$ with $|I_\eta|=\kappa$, $(n^\nu_\eta,n^p_\eta,n^\iota_\eta,n^h_\eta)\in\omega^4$, and a tuple of terms $\bar{t}_\eta$ in the group language such that for each $i\in I_\eta$, there exists $\bar{a}''_{\eta^\frown(i)}\in X\cup H$ of the form $\bar{a}^{\nu''}_{\eta^\frown(i)}{}^\frown\bar{a}^{p''}_{\eta^\frown(i)}{}^\frown\bar{a}^{\iota''}_{\eta^\frown(i)}{}^\frown\bar{a}^{h''}_{\eta^\frown(i)}$ where $\bar{a}^{\nu''}_{\eta^\frown(i)}:=(a^{\nu''}_{\eta^\frown(i),0},...,a^{\nu''}_{\eta^\frown(i),n^\nu_\eta-1})\in X^\nu$, $\bar{a}^{p''}_{\eta^\frown(i)}:=(a^{p''}_{\eta^\frown(i),0},...,a^{p''}_{\eta^\frown(i),n_\eta^p-1})\in X^p$, $\bar{a}^{\iota''}_{\eta^\frown(i)}:=(a^{\iota''}_{\eta^\frown(i),0},...,a^{\iota''}_{\eta^\frown(i),n_\eta^\iota-1})\in X^\iota$, and $\bar{a}^{h''}_{\eta^\frown(i)}:=(a^{h''}_{\eta^\frown(i),0},...,a^{h''}_{\eta^\frown(i),n_\eta^h-1})\in H$, such that $\bar{a}'_{\eta^\frown(i)}=\bar{t}_\eta(\bar{a}''_{\eta^\frown(i)})$. By \Cref{lem: coloring on omega-kappa}, we can find $(n^\nu,n^p,n^\iota,n^h)\in\omega^4$, a tuple of terms $\bar{t}$, and a map $f$ from $\omega^{<\kappa}$ to $\kappa^{<\kappa}$ such that 
\begin{itemize}
\item[(iv)] $\bar{t}_{f(\eta)^-}\!\!=\bar{t}$ for all $\eta\in\omega^{<\kappa}$,
\item[(v)] $f(\eta)\lhd f(\eta')$ for all $\eta\lhd\eta'$,
\item[(vi)] $f(\eta)\lex f(\eta')$ for all $\eta\lex\eta'$,
\item[(vii)] $\{f(\eta^\frown(i)):i<\omega\}\subseteq \{(f(\eta^\frown(0))^-){}^\frown(i): i\in I_\eta\}$ for each $\eta\in\omega^{<\kappa}$,
\item[(viii)] for each $\eta\in\omega^{<\kappa}$, $\bar{a}''_{f(\eta)}$ is of the form $\bar{a}^{\nu''}_{f(\eta)}{}^\frown\bar{a}^{p''}_{f(\eta)}{}^\frown\bar{a}^{\iota''}_{f(\eta)}{}^\frown\bar{a}^{h''}_{f(\eta)}$, where $\bar{a}^{\nu''}_{f(\eta)}\!:=(a^{\nu''}_{f(\eta),0},...,a^{\nu''}_{f(\eta),n^\nu-1})\in X^\nu$, $\bar{a}^{p''}_{f(\eta)}\!:=(a^{p''}_{f(\eta),0},...,a^{p''}_{f(\eta),n^p-1})\in X^p$, $\bar{a}^{\iota''}_{f(\eta)}\!:=(a^{\iota''}_{f(\eta),0},...,a^{\iota''}_{f(\eta),n^\iota-1})\in X^\iota$, and $\bar{a}^{h''}_{f(\eta)}\!:=(a^{h''}_{f(\eta),0},...,a^{h''}_{f(\eta),n^h-1})\in H$, for each $\eta\in\omega^{<\kappa}$.
\end{itemize}
Thus $f(X)$ is a direct sibling set for each direct sibling set $X$. It is also easy to check that the images of strict left-leaning paths and strict right-veering paths are still strict left-leaning paths and strict right-veering paths, respectively. For each $\eta\in\omega^{<\kappa}$, let $\bar{a}_\eta:=\bar{a}''_{f(\eta)}$. Then $\varphi(\bar{x},\bar{y}):=\varphi'(\bar{x},\bar{t}(\bar{y}))$ witnesses BTP$^-$ with $(\bar{a}_\eta)_{\eta\in\omega^{<\kappa}}$. Note that, by the construction, $\bar{a}_\eta$ is of the form $\bar{a}^\nu_\eta{}^\frown\bar{a}^p_\eta{}^\frown\bar{a}^\iota_\eta{}^\frown\bar{a}^h_\eta$ where $\bar{a}^\nu_\eta\!:=(a^\nu_{\eta,0},...,a^\nu_{\eta,n^\nu-1})\in X^\nu$, $\bar{a}^p_\eta\!:=(a^p_{\eta,0},...,a^p_{\eta,n^p-1})\in X^p$, $\bar{a}^\iota_\eta\!:=(a^\iota_{\eta,0},...,a^\iota_{\eta,n^\iota-1})\in X^\iota$, and $\bar{a}^h_\eta\!:=(a^h_{\eta,0},...,a^h_{\eta,n^h-1})\in H$. We may assume $(\bar{a}_\eta)_{\eta\in\omega^{<\kappa}}$ is level-s-indiscernible by using \Cref{fact: X H are type definable} and the argument in \cite[Lemma 5.10]{KR20}. We may assume that $a^\nu_{\eta,i}\neq a^\nu_{\eta,j}$ for each $\eta\in\omega^{<\kappa}$ and $i\neq j<n^\nu$. By \Cref{fact: handle is unique} and \Cref{fact: X H are type definable}, we may assume that 
\begin{itemize}
\item[(ix)] $\{h\in X^\nu: h\text{ is a handle of }a^p_{\eta,i}\text{ for some }i<n^p\}\subseteq \bar{a}^\nu_\eta$ for each $\eta\in\omega^{<\kappa}$.
\end{itemize}

\medskip

Let $\bar{b}\models\{\varphi(\bar{x},\bar{a}_{0^i{}^\frown(1)})\}_{i<\kappa}$. Then there exist a term $\bar{s}$ and $\bar{c}:=\bar{c}^\nu{}^\frown\bar{c}^p{}^\frown\bar{c}^\iota{}^\frown\bar{c}^h$ with $\bar{c}^\nu\in X^\nu$, $\bar{c}^p\in X^p$, $\bar{c}^\iota\in X^\iota$, and $\bar{c}^h\in H$ such that $\bar{b}=\bar{t}(\bar{c})$. Note that $\bar{c}\models\{\varphi(\bar{s}(\bar{z}),a_{0^i{}^\frown(1)})\}_{i<\kappa}$ and $\varphi(\bar{s}(\bar{z}),\bar{y})$ also witnesses BTP$^-$ with $(\bar{a}_\eta)_{\eta\in\omega^{<\kappa}}$. Let $\bar{c}^p:=(c^p_0,...,c^p_{m-1})$. If $d$ is a handle of $c^p_i$ for some $i<m$, then by \Cref{fact: handle is unique} again, we may assume that $d$ satisfies one of the following.
\begin{itemize}
\item[(x)] there exists $j<n^\nu$ such that $d=a^\nu_{\eta,j}$ for all $\eta\in\omega^{<\kappa}$,
\item[(xi)] $d\neq a^\nu_{\eta,j}$ for all $\eta\in\omega^{<\kappa}$ and $j<n^\nu$.
\end{itemize}
Since $\kappa$ is sufficiently large, we may assume that
\begin{itemize}
\item[(xii)] for all $i,i'<\kappa$, 
\[
(\bar{a}^h_{0^i{}^\frown(j+1)})_{j<\omega}(\bar{a}^h_{0^i{}^\frown(2)^j{}^\frown(1)})_{j<\omega}\equiv_{\bar{c}^h}(\bar{a}^h_{0^{i'}{}^\frown(j+1)})_{j<\omega}(\bar{a}^h_{0^{i'}{}^\frown(2)^j{}^\frown(1)})_{j<\omega}
\] modulo $\Th(H)$.
\end{itemize}

Note that $X^\nu$ interprets a graph structure whose theory is $\Th(C)$, which is NBTP, and $(\bar{a}_\eta^\nu)_{\eta\in\omega^{<\kappa}}$ is still level-s-indiscernible modulo $\Th(X^\nu)$. Thus by \Cref{thm: criterion NBTP} and \Cref{fact: G=<X>H saturated}, there exist $i^*<\kappa$ and $\bar{c}^{\nu*}\in X^\nu$ such that 
\begin{itemize}
\item[(xiii)] $\bar{c}^{\nu*}\bar{a}^\nu_{0^{i*}{}^\frown(1)}$ and $\bar{c}^\nu\bar{a}^\nu_{0^{i*}{}^\frown(1)}$ have the same type modulo $\Th(X^\nu)$,
\item[(xiv)] $(\bar{a}^\nu_{0^{i*}{}^\frown(j+1)})_{j<\omega}$ or $(\bar{a}^\nu_{0^{i*}{}^\frown(2)^j{}^\frown(1)})_{j<\omega}$ is order indiscernible over $\bar{c}^{\nu*}$ modulo $\Th(X^\nu)$.
\end{itemize}

On the other hand, $\Th(H)$ is simple since it is stable, and $(\bar{a}_\eta^h)_{\eta\in\omega^{<\kappa}}$ is level-s-indiscernible modulo $\Th(H)$. Thus by \Cref{lem: simple level-s-indiscernible} and \Cref{fact: G=<X>H saturated}, there exist $i^\dagger<\kappa$ and $\bar{c}^{h\dagger}\in H$ such that
\begin{itemize}
\item[(xv)] $\bar{c}^{h\dagger}\bar{a}^h_{0^{i\dagger}{}^\frown(1)}$ and $\bar{c}^h\bar{a}^h_{0^{i\dagger}{}^\frown(1)}$ have the same type modulo $\Th(H)$,
\item[(xvi)] $(\bar{a}^h_{0^{i\dagger}{}^\frown(j+1)})_{j<\omega}$ and $(\bar{a}^h_{0^{i\dagger}{}^\frown(2)^j{}^\frown(1)})_{j<\omega}$ are order indiscernible over $\bar{c}^{h\dagger}$ modulo $\Th(H)$.
\end{itemize}
By (xii), we may assume $i^*=i^\dagger$. Put $\bar{c}^{h*}:=\bar{c}^{h\dagger}$. 

Without loss of generality, we may assume that $(\bar{a}^\nu_{0^{i*}{}^\frown(j+1)})_{j<\omega}$ is order indiscernible over $\bar{c}^{\nu*}$ modulo $\Th(X^\nu)$. By (ix) and \Cref{fact: bijection->automorphism}, there is an automorphism $\sigma$ on $\mathbb{G}$ sending $\bar{c}^\nu\bar{c}^\iota\bar{c}^h$ to $\bar{c}^{\nu*}\bar{c}^\iota\bar{c}^{h*}$ over $\bar{a}_{0^{i*}{}^\frown(1)}$. Let $\bar{c}^{p*}:=\sigma(\bar{c}^p)$ and $\bar{c}^*:=\bar{c}^{\nu*}{}^\frown\bar{c}^{p*}{}^\frown\bar{c}^\iota{}^\frown\bar{c}^{h*}$. Then $\bar{c}^{*}\models\varphi(\bar{s}(\bar{z}),\bar{a}_{0^{i*}{}^\frown(1)})$. By (x), (xi), and \Cref{fact: bijection->automorphism}, we have $\bar{a}_{0^{i*}{}^\frown(1)}\bar{c}^*\equiv\bar{a}_{0^{i*}{}^\frown(j+1)}\bar{c}^*$ for each $j<\kappa$. Thus $\bar{c}^*\models\{\varphi(\bar{s}(\bar{z}),\bar{a}_{0^{i*}{}^\frown(j+1)})\}_{j<\kappa}$. This is a contradiction.
\end{proof}
\end{theorem}

\end{document}